\let\ams@starttoc\@starttoc
\let\@starttoc\ams@starttoc
\patchcmd{\@starttoc}{\makeatletter}{\makeatletter\parskip\z@}{}{}
\numberwithin{equation}{section}
\newtheorem{theorem}{Theorem}[section]
\newtheorem{lemma}[theorem]{Lemma}
\theoremstyle{definition}
\newtheorem{remark}[theorem]{Remark}
\newcommand{\lb}{\mleft(}
\newcommand{\rb}{\mright)}
\newcommand{\lbb}{\mleft [}
\newcommand{\rbb}{\mright ]}
\newcommand{\labs}{\mleft |}
\newcommand{\rabs}{\mright |}
\newcommand{\lbrb}[1]{\lb #1 \rb}
\newcommand{\labsrabs}[1]{\labs#1\rabs}
\newcommand{\langlerangle}[1]{\langle#1\rangle}
\newcommand{\lbcurly}{\mleft\{}
\newcommand{\rbcurly}{\mright\}}
\newcommand{\lbcurlyrbcurly}[1]{\lbcurly#1\rbcurly}
\newcommand{\simi}{\stackrel{\infty}{\sim}}
\newcommand{\bo}[1]{\mathrm{O}\lbrb{#1}}
\newcommand{\so}[1]{\mathrm{o}\lbrb{#1}}
\newcommand{\Ebb}[1]{\Eb\lbb #1\rbb}
\newcommand{\Eb}{\mathbb{E}}
\newcommand{\Nb}{\mathbb{N}}
\newcommand{\N}{\mathbb{N}}
\newcommand{\Rb}{\mathbb{R}}
\newcommand{\R}{\mathbb{R}}
\renewcommand{\P}{\mathbb{P}}
\newcommand{\Lc}{\mathcal{L}}
\newcommand{\ind}[1]{\mathbbm{1}_{\lbcurlyrbcurly{#1}}}
\newcommand{\Var}{\operatorname{Var}}
\newcommand{\D}{\mathrm{d}}
\newcommand{\myitem}[1]{%
	\item[#1]\protected@edef\@currentlabel{#1}%
}
\DeclareMathSymbol{\mrq}{\mathord}{operators}{`'}
\begin{document}
	
	\date{}
	\author[,1]{Martin Minchev \thanks{
			Email: mjminchev@fmi.uni-sofia.bg.}}
	\author[,1,2]{Maroussia Slavtchova-Bojkova  \thanks{ 
			Email: bojkova@fmi.uni-sofia.bg}
	}
	
	\affil[1]{Faculty of Mathematics and Informatics, Sofia University "St. Kliment Ohridski", 5,
		James Bourchier blvd., 1164 Sofia, Bulgaria}
	
	\affil[2]{Institute of Mathematics and Informatics,  Bulgarian Academy of Sciences, Akad.  Georgi Bonchev str., 
		Block 8, Sofia 1113, Bulgaria}
	
	\title{Multi-type branching processes with immigration generated by  point processes}
	\maketitle

	%\keywords{Asymptotic analysis, Functional equations, Exponential functional, L\'evy processes, Wiener-Hopf factorizations, Infinite divisibility,  Stable L\'evy processes, Special functions, Intertwining, Bernstein function} % Separate items with ;
	
	%\subjclass{30D05, 60G51, 60J55, 60E07, 44A60, 33E99}

	\maketitle

\begin{abstract}
	Following the pivotal work of Sevastyanov \cite{Sevastyanov-57}, who considered branching processes with homogeneous Poisson immigration, much has been done to understand the behaviour of such processes under different types of branching and immigration mechanisms. 
		
		Recently, the case where the times of immigration
are generated by a non-homogeneous Poisson process was considered in depth. 
		In this work, we  demonstrate how one can use the framework of
point processes in order to go
beyond the Poisson process. As an illustration, we show how to transfer techniques from the case of Poisson immigration to the case where it is spanned by a determinantal point process.
\end{abstract}

	% In this work, we consider various generalizations of the Poisson point process that generate immigration times in the case of single or multitype branching processes, following the approach of \cite{Bojkova}. In particular, we obtain some asymptotic results and discuss the challenges of using Cox processes, which exhibit clustering behavior, and determinantal processes, which, in contrast, exhibit repulsive properties. For background on these point processes, see, e.g., \cite[Chapter 13]{Last} and \cite{Decreusefond}.\\
	
	\textbf{Keywords:} Branching processes with immigration, Point processes.
	
	\textbf{MSC2020 Classification: 60J80, 60G55}
	
	\section{Introduction}

The main purpose of this paper is to examine the impact of immigration on the behaviour of
both single- and multitype continuous-time Markov branching processes, focussing on immigration types that differ from the Poisson model. Although classical Poisson processes are the most commonly used to model the immigration component, they have certain limitations in practical applications. These include the assumption of equidispersion, which may not always hold,  and their time-homogeneity, which is addressed by introducing time-non-homogeneous intensity rates. Recently, such branching models with immigration at the jump times of a non-homogeneous Poisson process were considered in depth by Mitov et al. \cite{Mitov-Yanev-Hyrien-18-critical-multi} in the critical case, and later by Slavtchova-Bojkova et al. \cite{ Bojkova-Hyrien-Yanev-22, Bojkova-Hyrien-Yanev-23} in the non-critical case. In these studies, the efforts were concentrated mainly on analysing the asymptotic behaviour of the processes for various rates of the Poisson measure, under the assumption that these rates are asymptotically equivalent to either exponential or regularly varying functions. Consequently, results similar to the strong law of large numbers and central limit theorems were established.

Multitype Markov branching processes were first introduced by Kolmogorov and Dmitriev \cite{Kolmogorov-Dmitriev-47}, marking the term \emph{branching process} as one of the earliest concepts in the literature. The notion of branching processes with immigration was later formalised by Sevastyanov \cite{Sevastyanov-57}, who explored a single-type Markov process with immigration driven by a time-homogeneous Poisson process, and derived limiting distributions for subcritical, supercritical, and critical cases.

 Since then, numerous extensions of branching processes with immigration have been developed and thoroughly explored. Seminal reviews by Sevastyanov \cite{Sevastyanov-68}, and Vatutin and Zubkov \cite{Vatutin-Zubkov-87,Vatutin_AZ_93}, have highlighted many key results. More recent advancements have been made by Barczy et al. \cite{Barczy1,Barczy2}, González et al. \cite{Gonzalez-Kersting-Minuesa-19}, and Li et al. \cite{Li-Vatutin-Zhang-21}, among others, who have expanded the theory of these processes.

Branching processes with time-non-homogeneous
immigration were first introduced by Durham \cite{Durham-71} and Foster and Williamson \cite{Foster-Williamson-71}.
 Comprehensive reviews of these models can be found in the monographs by Badalbaev et al. \cite{BadRah} and Rahimov \cite{Rahimov-95}, as well as in Rahimov’s review paper \cite{Rahimov-21}. 
  In the recent work of Rabehasaina and Woo
\cite{Rabehasaina-21},  the model described in Mitov et al. \cite{Mitov-Yanev-Hyrien-18-critical-multi} and Slavtchova-Bojkova et al. \cite{Bojkova-Hyrien-Yanev-22, Bojkova-Hyrien-Yanev-23} was considered, and the established limit results were obtained by means of characteristic functions.

In this work, we aim to introduce the general framework of Laplace functionals of random point processes within the context of branching processes. We believe that this approach will be valuable for researchers looking to extend their work beyond the Poisson process. For example, our method simplifies the derivation of the generating function for a branching process with immigration, which often serves as the foundation for many studies. Traditionally, this derivation was based on the exact distribution of points over a given interval, which is tractable in the case of a Poisson process, as demonstrated in \cite[Theorem 1]{Mitov-Yanev-Hyrien-18-critical-multi}, \cite[Lemma 3.1]{Rabehasaina-21}, and \cite[Theorem 1]{Butkovsky-12}. Our framework provides a more unified and often simpler approach to addressing problems involving immigration, as one can see from the proof of the third item of Theorem \ref{thm: DPP2} and similar results for Poisson immigration, such as \cite[Theorem 8]{Mitov-Yanev-Hyrien-18-critical-multi} and \cite[Theorem 3]{Rabehasaina-21}.  Due to the tractability of the framework, we correct minor errors from the last two referenced results, see Remark \ref{rem: eigen}.

The paper is structured as follows:  in Section \ref{sec: 2} we define the framework of point processes, and give a few examples
of such objects, and in Section \ref{sec: 3} we introduce
	single- and multitype branching processes with immigration.
	 	Section \ref{sec: 4} contains results for the %Laplace	transform 
    probability generating functions of the branching process with immigration  spanned by a general point process, and asymptotic results in the case
where the immigration is generated by a
	 Determinantal Point Process (DPP). More precisely, in Theorem \ref{thm: uni} the generating functions of  branching processes with immigration  spanned by DPPs, Cox process and Fractional Poisson Process (FPP) are derived in the single-type case, while Theorem \ref{thm: multi} presents the multivariate analogue.
     In Subsection \ref{subsec: 4.3}, for the case of DPP immigration, we establish equations for the mean and covariances of the process. Furthermore, in Theorem \ref{thm: DPP}, we prove limit results analogous to those in the case without immigration, covering sub-, super-, and critical cases, respectively.      
	  Section \ref{sec: proofs} contains the proofs of the new results.  

\section{Point processes on the real line}
\label{sec: 2}
As we will be interested in point processes (PPs)
describing the times of immigrants joining a branching process, we define the basic notions for PPs on $\mathbb{R}_+:=[0,\infty)$ (or just PPs
from now on) although the formalism for defining them on an arbitrary Polish space is similar. 

Informally, we can consider the point process as a random collection of points; however, to formalise
this idea, the basic construction is via
the so-called random measures. 

We follow the presentation of
\cite[Chapter 2.1]{Last-Penrose-18}, however,
another suitable reference is
the excellent monograph by Baccelli et al.
\cite{Baccelli-Blaszczyszyn-Karray-24}.

Let $(\Omega, \mathcal{F}, \P)$ be a probability space, $\mathcal{B}(\R)$ be the usual Borel $\sigma$-algebra on $\R$, and $\mathbf{N}_{<\infty}$ be the set of measures
$\widetilde
{\Phi}$ on $\R_+$ such that for each
$B\in \mathcal{B}(\R_+), \widetilde{\Phi}(B) \in \N_0:=\N\cup \{0\}$.
Further, denote by $\mathbf{N}$ the
set of measures which can be represented as a countable sum of
elements of $\mathbf{N}_{<\infty}$,
and let 
$\mathcal{N}$ be 
the $\sigma$-algebra generated by the sets
\[
\{\Phi \in \mathbf{N}:\Phi(B) = k
\text{ for some }B \in \mathcal{B}(\Rb_+)
\text{ and } k \in \N_0\}.
\]
We call $\Phi$ a random measure or a PP if
it is a random element of
$(\mathbf{N}, \mathcal{N})$, that is,
a measurable mapping $\Phi:
\Omega \to \mathbf{N}$. In this work,
we will consider PPs which are \emph{proper}, i.e., such that there exist
random variables $\kappa,X_1, X_2, \dots$
such that
\[
\Phi = \sum_{i \leq \kappa} \delta_{X_i},
\]
where $\delta_x$ is the Dirac mass at $x$, so $\Phi$ places unit mass at the random locations $X_1, X_2,\dots,X_\kappa$.
 For a deeper and more general presentation, see \cite[Chapter 2.1]{Last-Penrose-18},
\cite[Chapter 1]{Baccelli-Blaszczyszyn-Karray-24}
or \cite[Chapter 2]{Kallenberg-17}.

Some of the characteristics which help
describing a PP include:
\begin{itemize}
	\item its \emph{intensity measure} $\Lambda$, defined
	by $\Lambda(B) = \Ebb{\Phi(B)}$, where 
	$B \in \mathcal{B}(\R_+)$;
	\item its \emph{Laplace functional},
	which characterises the process completely,
	for all test functions $f$,
	\begin{equation}
		\label{def: Laplace}
		\Lc_\Phi(f) := \Ebb{e^{-\int f\D \Phi}} = 
		\Ebb{e^{-\sum_{i \leq \kappa} f(X_i)}}.
	\end{equation}
	The set of test functions can be all positive measurable ones, like in the case of 
	a Poisson process, or these of 
	compact support, like in the 
	case of determinantal point processes;
	\item its \emph{joint intensities} $\rho_k$, if they exist, defined as $\rho_k: \R_+^k \to\R_+$ such that for each disjoint $B_1, \dots, B_k \in \mathcal{B}(\R_+)$,
	\[
	\Ebb{\prod_{i=1}^k \Phi(B_i)} = \int_{B_1 \times \dots \times B_k}  \rho_k (x_1, \dots, x_k) \Lambda(\D x_1) \dots \Lambda(\D x_k).\]

\end{itemize}

\subsection{Poisson processes}
Probably the most used point process, due to its
mathematical tractability, is the Poisson one, which is characterised by the property that its
intensity measure $\Lambda$ is such that:
\begin{enumerate}
	\item for every $B \in \mathcal{B}(\R_+)$,
	$\Phi(B)\sim Pois(\Lambda(B))$ and
	\item for every disjoint $B_1, \dots,
	B_m \in \mathcal{B}(\R_+)$, 
	$\Phi(B_1), \dots, \Phi(B_m)$ are independent.
\end{enumerate}
It is a consequence that the Laplace functional of this PP is then
\begin{equation}
    \label{eq: Laplace PP}
\mathcal{L}_\Phi(f) = \exp\left(-\int_{\R_+} \left(1 - e^{-f(x)}\right)\Lambda(\D x)\right).
\end{equation}
If $\Lambda(\D x) = \lambda \D x$, then we say that the Poisson process is \textit{homogeneous} 
of rate $\lambda$.

In the following three subsections, we list
some PPs which are often used and
cover, respectively, repulsive, clustering, and heavy-tailed behaviour of the interarrival times.

\subsection{Determinantal point processes (DPPs)}

Determinantal point processes (DPPs) were introduced by Macchi \cite{Macchi-75} under the name \emph{fermion process}, due to their repulsive behaviour.  Since then they have arisen in various contexts including random matrix theory, zeros of random analytic functions, statistical mechanics, and even machine learning \cite{Kulesza-Taskar-12}.  We refer the reader to \cite[Chapter 5]{Baccelli-Blaszczyszyn-Karray-24} for an introduction of their formalism, and to the survey \cite{Decreusefond-16} for an overview.

We call a point process \(\Phi\) on \(\R_{+}\)
  \((\Lambda,K)\)\emph{-determinantal}
  if $\Lambda$ is a locally finite measure on $\R_+$, and its joint intensities satisfy, for
$\boldsymbol{x} = (x_1, \dots,x_n)$,
\[
\rho_{n}(x_{1},\dots,x_{n})
=\det\lbrb{K(x_{i},x_{j})}_{1\le i,j\le n}=:D(\boldsymbol{x}),
\]
where \(K\colon\R_{+}^{2}\to\R_+\) is symmetric; for $\Lambda^n$-almost all $\boldsymbol{x}$, 
$\lbrb{K(x_{i},x_{j})}_{1\le i,j\le n}$ is non-negative definite, and for each bounded $D \in \mathcal{B}(\R_+)$,
$\int_D K(x,x) \Lambda(\D x)$  is finite. These properties ensure that 
$\Lambda$ and $K$ determine uniquely $\Phi$, see
\cite[Corollary 5.1.14]{Baccelli-Blaszczyszyn-Karray-24}. However, they  may look too implicit, as they do not describe how to build admissible kernels. One recipe is to look at
\textit{regular} kernels of the type
$K(x,y) = \sum_{n\in\N}\lambda_n\phi_n(x)\phi_n(y)$ for $\phi_i $ orthonormal
in $L^2(\Lambda, \R_+)$ and $\lambda_i\in[0,1]$,
see \cite[Theorem 5.2.5]{Baccelli-Blaszczyszyn-Karray-24}.

Further, the Laplace functional of \(\Phi\) is given, for any nonnegative \(f\) of compact support, and
\[
\varphi_n(\boldsymbol{x}) := 
\prod_{i=1}^n (1 - e^{-f(x_i))}),
\quad
\quad
\Lambda(\D \boldsymbol{x}) := \Lambda(\D x_1)\dots \Lambda(\D x_n)
\]
by
\begin{equation}\label{eq: DPP Laplace}
\begin{split}
\mathcal{L}_\Phi(f) &= 1 + \sum_{n \geq 1}
\frac{(-1)^n}{n!}
\int_{\R_+^n}\rho_n(x_1, \dots , x_n)
\varphi_n(x_1, \dots, x_n)\Lambda(\D x_1)\dots \Lambda(\D x_n)\\
&=1 + \sum_{n \geq 1}
\frac{(-1)^n}{n!}
\int_{\R_+
^n}D(\boldsymbol{x})\varphi_n(\boldsymbol{x})\Lambda(\D \boldsymbol{x}),
\end{split}
\end{equation}
see for example \cite[Proposition 5.1.18 and Corollary 5.1.19]{Baccelli-Blaszczyszyn-Karray-24} or \cite[Theorem 3.6 with \(\alpha=-1\)]{Shirai-Takahashi-2003}. In particular, note that if $\Lambda (\D x)$ is diffuse and
we choose the kernel $K(x,y)=\ind{x=y}$, then  for all $n$, $\rho_n$ is equal to 1 $\Lambda(\D \boldsymbol{x})$-almost everywhere, so we
obtain the usual $\Lambda$-Poisson process, see
\cite[Example 5.1.6]{Baccelli-Blaszczyszyn-Karray-24} for a rigorous derivation. 

\subsection{Cox Processes}
Cox processes, also known as doubly stochastic Poisson processes, extend the ordinary Poisson process by randomising its intensity measure.  
They are usually used for modelling phenomena where event occurrence is influenced by underlying random factors, as demonstrated in \cite{ShotNoise, Syversveen-98, Yannaros-88} and more recently in \cite{MR3161587}, which discuss their applications in spatial and spatio-temporal data analysis.

Formally, let $\eta$ be a random $\sigma$-finite measure on $\R_+$. Then we call a point process $\Phi$ a Cox process with  \emph{directing measure} $\eta$ if conditional on $\eta$
\[
\Phi \mid \eta \sim 
\text{$\eta$-Poisson process}.
\]
Therefore, its Laplace functional is obtained by averaging over $\eta$
in \eqref{eq: Laplace PP}
\begin{equation}
    \label{eq: Laplace Cox}\Lc_\Phi(f)
=\Ebb{\exp\Bigl(-\int_{\R_+}\bigl(1-e^{-f(x)}\bigr)\,\eta(\D x)\Bigr)},
\end{equation}
for every nonnegative measurable $f$.
The first two moment measures follow by conditioning on $\eta$ and using the Poisson moment formula, as in \cite[Proposition 13.6]{Last-Penrose-18}:
\[
\Ebb{\Phi(B)} = \Ebb{\eta(B)},
\qquad
\Var\bigl(\Phi(B)\bigr)
= \Var\bigl(\eta(B)\bigr) \;+\;\Ebb{\eta(B)},
\]
for all $B\in\mathcal{B}(\R_+)$. For proofs and more properties, see for example \cite[Chapter 13]{Last-Penrose-18}.

\subsection{Fractional Poisson processes}
The fractional Poisson process (FPP), introduced by Mainardi et al. \cite{Mainardi-Gorenflo-Scalas-04} is a non-Markovian  generalisation of the standard homogeneous Poisson process, which has heavy-tailed interarrival times; see also the monograph by Meerschaert and Sikorskii \cite{Meerschaert-Sikorskii-19}. It
depends on parameters $\beta \in(0,1]$ and $\lambda>0$, and is of renewal type,
that is, the atoms of the associated random measure $\Phi_{\beta, \lambda}$ are situated
at $X_n = T_1 + \dots + T_n$ with iid
$T_i$ such that
\[
\P(T_i>t) = E_\beta(- \lambda t^\beta),
\quad
\text{where}
\quad
E_\beta(z) := \sum_{k=0}^\infty
\frac{z^k}{\Gamma(1+\beta k)}
\]
is
the Mittag-Leffler function.
    Define the counting process by
$N_{\beta,\lambda}(t) := \Phi_{\beta,\lambda}((0,t])$.
For $\beta=1$, we get $T_i \sim Exp(\lambda)$, so
the FPP is in fact a homogeneous Poisson process of rate $\lambda$. However, for $\beta\in(0,1)$,
$\P(T_i>t)$ is of
order $1/t^\beta$ and a fortiori
the expectation
$\Ebb{T_i}$ is infinite. 
The work of Meerschaert et al. \cite{Meerschaert-11} extends the link to  Poisson processes beyond $\beta=1$ by showing that 
\begin{equation}
    \label{eq: timechange}
N_{\beta,\lambda}(t) = \mathcal{N}_{\lambda}(Y_\beta(t)),
\end{equation}
where $\mathcal{N}_{\lambda}$ is a homogenous Poisson process of rate $\lambda > 0$, and $Y_\beta$ is the inverse of an independent $\beta$-stable subordinator.
Thus understanding the asymptotic properties of inverse subordinators  \(Y_\beta\), for example as in \cite{Ascione-Savov-Toaldo-24}, is key to deriving limit theorems and other results for the FPP.
Consequently,
the Laplace functional of
$\Phi_{\beta, \lambda}$ can be written
\begin{equation}
    \label{eq: Laplace FPP}
\mathcal{L}_{\Phi_{\beta, \lambda}}(f) = \Ebb{
\exp\lbrb{-\lambda \int_{\Rb_+} \lbrb{1-e^{
-f(t)}}\D Y_\beta(t)}}.
\end{equation}

\section{Branching processes with immigration}
\label{sec: 3}
In this section we present the branching 
process with immigration and fix the
relevant notation.

\subsection{Single-type
	processes}
We denote by $Z(t)$ the number of
particles at time $t$ and outline that
we use $G_X$ for the generating
function of a random variable $X$.

The underlying branching process 
without immigration $Z_\times$ is
a Markov branching process in continuous time with starting
state $Z_{\times}(0)= I$, where $I$ will be the law of the size of a single immigrant group. Therefore we can construct the process with immigration $Z$ as follows:
\begin{itemize}
	
	\item At times $T_1 < T_2< \dots$,
	new particles (immigrants) $I_1, I_2, \dots$ arrive with $I_k\sim I$ iid. We assume that $\Ebb{I}$ is finite.
	
	\item Each particle evolves independently of the others and
	at $Exp(1/\mu)$ time
	dies and produces $\nu$ new particles. It is then known, see
	\cite[(5) on p.106]{Athreya-Ney-72},  that
	\[
	\frac{\partial}{\partial t} G_{Z_\times}(t, s) = f_\nu(G_{Z_\times}(t,s)),
	\]
	where $G_{Z_\times}(t,s):=G_{Z_\times(t)}(s)$ and $f_\nu(s) = (G_\nu(s)-s)/\mu$.
	Also,
	$M_\times(t) := \Ebb{Z_\times(t)} = e^{\rho t}$
	with  $\rho :=f_\nu'(1)= (\Ebb{\nu}-1)/\mu$. 
	The process is called subcritical, 
	critical or supercritical according
	to whether $\Ebb{\nu}$ is, respectively, less, equal, or larger than 1.
	
\end{itemize}
Therefore, the process with immigration can be represented as
\[
Z(t) = \sum_{i:T_i\leq t} {Z}^{(i)}_{\times}(t-T_i),
\]
where ${Z}^{(i)}_{\times}$ are
iid copies of the process
without immigration
$Z_\times$ started with ${Z}^{(i)}_{\times}(0) = I_i$.
\subsection{Multitype processes}
Consider now a generalisation of the
process presented above which is composed of $d$ types of particles. 
We note that we will use bold symbols
$\boldsymbol{\nu}$ for a vector, 
and, unless stated otherwise explicitly, $\nu_{i}$ will denote the value of its $i$th coordinate. We define the generating function in this case as
\[
G_{\boldsymbol{\nu}}(\boldsymbol{s}) := \sum_{\boldsymbol{n}\in \N_0^d}
\P\left(\boldsymbol{\nu}=\boldsymbol{n}\right)\prod_{i=1}^{d}
s_i^{n_{i}}
\quad
\text{for}
\quad
\boldsymbol{s}=(s_1, \dots, s_d).\]
The multitype branching process $\boldsymbol{Z}$ with
$\boldsymbol{Z}(0)=\boldsymbol{I}^{(0)}\sim\boldsymbol{I}$,
is described by:
\begin{itemize}
	\item At times $0 = T_0<T_1<T_2<\dots$, iid
	immigrants
	$\boldsymbol{I}^{(0)}, \boldsymbol{I}^{(1)}, \boldsymbol{I}^{(2)},
	\dots\sim\boldsymbol{I}$ join the population described by the branching process $\boldsymbol{Z}.$
	\item Each particle of type $i$ lives
	$Exp(1/\mu_i)$ time after which it dies
	and produces new ones with distribution $\boldsymbol{\nu}_i$
	of their counts,    that is to say $({\nu}_i)_{j}$ of type $j$.
	
\end{itemize}\label{conditions}
\textbf{Assumptions}: In this work we assume that
\begin{enumerate}
    \item the immigrants and progeny, $I_i$ and $(\nu_i)_j$, have finite expectation, and the latter is not a.s. constant. For some of the results we impose the following stronger restriction:
    \begin{equation}\tag{H}\label{eq:strong}
        I_i \text{ and } (\nu_i)_j \text{ have finite second moments.}
    \end{equation}
    \item the mean offspring matrix
    \[
      M := \bigl(\Ebb{(\nu_i)_j}\bigr)_{1\le i,j\le d}
    \]
    is assumed 
    \emph{primitive}, i.e.\ there exists an integer $p\ge1$ such that $M^p$ has strictly positive entries. We briefly discuss the more general \textit{decomposable} (or also \textit{reducible}) case in Subsection~\ref{sec:discussion}.
\end{enumerate}

%------------------------------------------------------------

Similarly to the single-type case,
\[
\boldsymbol{Z}(t) = \sum_{i:T_i\leq t}
\boldsymbol{Z}^{(i)}_{\times}
(t-T_i),
\]
where $\boldsymbol{Z}^{(i)}_{\times}$ are iid processes with law $\boldsymbol{Z_\times}$, that is,
without immigration and started with $\boldsymbol{I^{(i)}}$.
% Note that this implies that
% \[
% G_{\boldsymbol{Z}_{\times, i}(t)}(\boldsymbol{s}) = G_{\boldsymbol{I}}\lbrb{G_{\boldsymbol{Z}_\times(t)}(\boldsymbol{s})}
% \]
% for a process $\boldsymbol{Z}_{\times}$ with $\boldsymbol{Z}_\times(0)= 1 $
This gives
the generating function of the process,
with
$G_{\boldsymbol{Z}}(t,\boldsymbol{ s}):=
G_{\boldsymbol{Z}(t)}(\boldsymbol{ s})
$ and $G_{\boldsymbol{Z}_{\times}}(t,\boldsymbol{s}):=
    G_{\boldsymbol{Z}_{\times}(t)}(\boldsymbol{s})$,
\begin{equation}
	\label{eq: generating}
	G_{\boldsymbol{Z}}(t,\boldsymbol{ s})=
	\Ebb{\prod_{i:T_i\leq t}G_{\boldsymbol{Z}^{(i)}_{\times}}(t-T_i,\boldsymbol{s})}
	= \Ebb{\exp\lbrb{\sum_{i:T_i \leq t}\ln\lbrb{G_{\boldsymbol{Z}_{\times}}(t-T_i,\boldsymbol{s})}}}.
\end{equation}
Translating the last in the language
of the Laplace functional of the random measure
$\Phi$
governing the process $T_i$, substituting in \eqref{def: Laplace}, we get that
\begin{equation}
    \label{eq: generating mu}
G_{\boldsymbol{Z}}(t,\boldsymbol{s})
= \mathcal{L}_\Phi(f_t),
\quad
\text{with}
\quad
f_t(x):=-\ln\lbrb{G_{\boldsymbol{Z}_{\times}}\lbrb{t-x,\boldsymbol{s}}}\ind{x\leq t}.
\end{equation}
If we want to work instead with the
Laplace transform,
\[
\Lc_{\boldsymbol{Z}}(t,\boldsymbol{s}):=\Ebb{\exp(-\langle\boldsymbol{Z}(t),\boldsymbol{s}\rangle)},
\]
where $\langle\cdot,\cdot\rangle$ is usual scalar product on $\R^d$,
we would have
\begin{equation}
	\label{eq: Laplace mu}
	\mathcal{L}_{\boldsymbol{Z}}(t,\boldsymbol{s})
	= \Lc_\Phi(g_t),
	\quad
	\text{with}
	\quad
	g_t(x):=-\ln\lbrb{\Lc_{\boldsymbol{Z}_{\times}}\lbrb{t-x,\boldsymbol{s}}}\ind{x\leq t}.
\end{equation}
\subsection{Decomposable (Reducible) Case}\label{sec:discussion}

In the more general \emph{decomposable} (or \emph{reducible}) setting, the mean offspring matrix~$M$ can be permuted into a block-triangular form
\[
  P M P^{-1} = \begin{pmatrix}
    M_{11} & M_{12} & \cdots & M_{1k} \\
    0      & M_{22} & \cdots & M_{2k} \\
    \vdots & \vdots & \ddots & \vdots \\
    0      & 0      & \cdots & M_{kk}
  \end{pmatrix},
\]
where each diagonal block~$M_{ii}$ is irreducible (primitive).  One then studies the multitype branching process in stages:

\begin{itemize}
  \item \textbf{Block~1 (leading class):} behaves as an irreducible system with its own growth rate; immigration into this class follows the point process mechanism as before.
  \item \textbf{Block~$r$ (subsequent class):} receives external immigration from class~$1,\dots,r-1$ via their off-diagonal connections $M_{jr}$, in addition to the original point-process arrivals.  One treats the output of earlier blocks as an inhomogeneous immigration input and applies the irreducible theory to each block in turn.
\end{itemize}

Asymptotic results for each class can then be obtained by iterating the single-block analysis, noting that the effective immigration intensity into block~$r$ is a superposition of point-process-driven arrivals plus the evolving contribution from blocks~$1$ through $r-1$.  

\section{Asymptotic behaviour
	at infinity
	of the branching process with immigration }
\label{sec: 4}

\subsection{Single-type processes}
The continuous-time Markov branching process without immigration is presented in Athreya and Ney \cite[Chapter III]{Athreya-Ney-72}. We refer again to Rahimov \cite{Rahimov-21}
for an extensive recent review of branching processes with immigration and to Vatutin and Zubkov \cite{Vatutin-Zubkov-87}
for
a review of the classical results until 1983.

A standard argument using martingale theory,
for example \cite[Theorem 1 on p.111]{Athreya-Ney-72}, shows that for the process 
without immigration $Z_{\times}$, there exists a real-valued random variable $W_\times$ such that
\begin{equation}
	\label{eq: limit}
	\frac{Z_\times(t)}{e^{\rho t}} \xrightarrow[t \to \infty]{a.s.} W_\times.
\end{equation}

In the supercritical case,
it is natural to characterise some of the properties of $W_\times$, e.g., the existence of a
density.
In the sub- and critical cases, this $W_\times$ is a.s. equal to $0$ and a common question is
to analyse the so-called Yaglom limit 
as $t\to\infty$
of quantities such as $\P(Z_\times(t)>f(t)|Z_\times(t)>0)$ for some $f$,
see \cite[Chapter III, Theorems 2-4]{Athreya-Ney-72}. Similar results are also available for the case of age-dependent processes, see \cite[Chapter IV]{Athreya-Ney-72}.

When an immigration component is added, it is often possible to extract asymptotic results for $Z$ using the information for the underlying process $Z_\times$ through representation
\eqref{eq: generating}. The case of Poisson immigration is considered when
the underlying process is subcritical in \cite{Hyrien-Mitov-Yanev-15-sub}, critical in \cite{Mitov-Yanev-13, Mitov-Yanev-24,Bojkova-Yanev-19-crit}, and supercritical in \cite{Hyrien-Mitov-Yanev-13-super}.

We provide the generating functions in the cases of DPPs, Cox, and FPPs. After choosing
an exact model, this can provide the limit of a scaled $Z$ through the results for $Z_\times$. We also note that it is possible to work with the generating function of the process
starting with exactly one particle $Z_\times^{(1)}$, that
is
$Z_\times^{(1)}(0) = 1$, via the relation
\begin{equation}
	\label{eq: functional 1d}
	G_{Z_{\times}}(s) = G_I\lbrb{G_{Z_\times^{(1)}}(s)}.
\end{equation}

\begin{theorem}\label{thm: uni}
	\begin{enumerate}
			\item In the case of immigration spanned by a $(\Lambda, K)$-DPP process, the expectation		$\Ebb{s^{Z(t)}}$ is equal to
		\begin{equation}
		    \label{eq: Laplace DPP}
		1 + \sum_{n\geq 1}\frac{(-1)^{n}}{n!}\int_{(0,t]^n}
		D(\boldsymbol{x})
		\prod_{i=1}^n\lbrb{1 - G_{Z_{\times}}(t-x_i,s)} \Lambda(\D \boldsymbol{x})
				\end{equation}
		\item In the case of immigration spanned by a Cox process with a
		directing measure $\eta$, 
		\[
		\Ebb{s^{Z(t)}} = {\mathbb{E}\lbb \exp\lbrb{-
				\int_{(0,t]}\lbrb{1-G_{Z_{\times}}(t-x,s)}\eta(\D x)}\rbb}.
		\]
		\item 
	 In the case of immigration spanned by a
	 $(\beta,\lambda)$-FPP, 
	\[
	\Ebb{s^{Z(t)}} = {\mathbb{E}\lbb \exp\lbrb{- \lambda
			\int_{(0, t]}\lbrb{1-G_{Z_{\times}}\lbrb{t-x,s)}} \D Y_\beta(x)}\rbb},
	\]
    where $Y_\beta$
    is the inverse of an independent
    $\beta$-stable subordinator.
	\end{enumerate}
To obtain the respective Laplace transforms, one should
replace $G_{Z_\times}$ with $\mathcal{L}_{Z_\times}$.
\end{theorem}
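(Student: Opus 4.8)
The plan is to use the master identity \eqref{eq: generating mu}, specialised to the single-type setting, which expresses the generating function as a Laplace functional evaluated at one specific test function. Concretely, for a single type equation \eqref{eq: generating mu} reads $\Ebb{s^{Z(t)}} = \mathcal{L}_\Phi(f_t)$ with $f_t(x) = -\ln\lbrb{G_{Z_\times}(t-x,s)}\ind{x\leq t}$. The three items then follow simply by substituting this $f_t$ into the three explicit Laplace-functional formulas derived in Section \ref{sec: 2}, namely \eqref{eq: DPP Laplace} for the DPP, \eqref{eq: Laplace Cox} for the Cox process, and \eqref{eq: Laplace FPP} for the FPP, and then simplifying.

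The computational core of all three cases is one elementary identity. By the very definition of $f_t$ we have $e^{-f_t(x)} = G_{Z_\times}(t-x,s)$ for $x\le t$ and $e^{-f_t(x)} = 1$ for $x>t$, so that
\[
1 - e^{-f_t(x)} = \lbrb{1 - G_{Z_\times}(t-x,s)}\ind{x\leq t}.
\]
This single relation does double duty: it produces the integrand $1 - G_{Z_\times}(t-x,s)$ and, through the indicator, collapses every integral from $\R_+$ (or $\R_+^n$) down to $(0,t]$ (or $(0,t]^n$). For item (i) I would insert this into \eqref{eq: DPP Laplace}: the factor $\varphi_n(\boldsymbol{x}) = \prod_{i=1}^n\lbrb{1-e^{-f_t(x_i)}}$ becomes $\prod_{i=1}^n\lbrb{1 - G_{Z_\times}(t-x_i,s)}$ on $(0,t]^n$ and vanishes otherwise, which is exactly \eqref{eq: Laplace DPP}. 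For items (ii) and (iii), the same substitution in \eqref{eq: Laplace Cox} and \eqref{eq: Laplace FPP} reduces the inner integral to one over $(0,t]$, yielding the two stated expectations verbatim.

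Before invoking \eqref{eq: DPP Laplace} one must verify that $f_t$ is an admissible DPP test function, that is, nonnegative and of compact support. Nonnegativity holds because $G_{Z_\times}(t-x,s)\in[0,1]$ for $s\in[0,1]$, forcing $f_t\ge 0$ (and the boundary case $G_{Z_\times}=0$ causes no trouble, since the displayed identity is phrased directly through $e^{-f_t}$ rather than through the logarithm); compact support is immediate as $f_t$ is supported in $[0,t]$. For the Cox and FPP cases, \eqref{eq: Laplace Cox} and \eqref{eq: Laplace FPP} hold for every nonnegative measurable $f$, so only nonnegativity is required. The only real obstacle is therefore careful bookkeeping—tracking the indicator consistently and confirming admissibility—while the substitution itself is mechanical. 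Finally, the closing claim about Laplace transforms is obtained by repeating the entire argument with the companion identity \eqref{eq: Laplace mu}, replacing $f_t$ by $g_t$ and $G_{Z_\times}$ by $\mathcal{L}_{Z_\times}$ throughout.
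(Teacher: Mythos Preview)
Your proposal is correct and follows exactly the paper's own approach: substitute the test function $f_t$ from \eqref{eq: generating mu} into the three Laplace-functional formulas \eqref{eq: DPP Laplace}, \eqref{eq: Laplace Cox}, \eqref{eq: Laplace FPP}, and use \eqref{eq: Laplace mu} for the Laplace-transform version. You have simply spelled out more of the details (the identity $1-e^{-f_t(x)}=\lbrb{1-G_{Z_\times}(t-x,s)}\ind{x\le t}$ and the admissibility check for $f_t$) than the paper, which compresses the whole argument into two sentences.
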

\begin{proof}
	The generating functions are directly obtained by substituting
    \eqref{eq: generating mu} into the relevant Laplace functional, that is, 
    \eqref{eq: Laplace Cox} for Cox processes, \eqref{eq: Laplace FPP}
    for FPPs, and \eqref{eq: DPP Laplace} for DPPs.
For the Laplace transform, instead of \eqref{eq: generating mu}, we should substitute \eqref{eq: Laplace mu}.
\end{proof}

To make a connection with the previously presented classes of point processes,
the second item of the last theorem was proved by Butkovsky \cite{Butkovsky-12} and applied in the context of branching processes.

As for the fractional point process, since it is a renewal process, we can apply the results of
Kaplan and Pakes \cite{Kaplan-Pakes-74-super, Kaplan-Pakes-74-sub} for the sub- and supercritical cases, see also \cite{Pakes-Parthasarathy-75}. The only specificity one needs to take care of is that the interarrival times have infinite expectation, however the techniques are often the same, see \cite[Remarks on page 379, 385, and 389]{Kaplan-Pakes-74-super}.

% \subsubsection{Moments}
\subsection{Multitype processes} The results for multitype
processes have similar nature, however generalising the one-dimensional
results is not always direct and requires a careful treatment. 
To extend $\eqref{eq: limit}$, let us define an analogue of $\Ebb{\nu}$, that is $e^A$, where the generator  $d \times d$ matrix $A$ is defined by 
\begin{equation}
	\label{eq:def matrix A}
A_{i,j} := \mu_i^{-1} \lbrb{\Ebb{\lbrb{\nu_i}_j} - \ind{i= j}}.
\end{equation}
We will suppose that $A$ is irreducible and let $\rho$ be its largest eigenvalue (also known as Perron-Frobenius root).
The process is called sub-, super- or just critical according to $\rho$, respectively,
less, larger or equal than 0. Let $\boldsymbol{u}$ and $\boldsymbol{v}$ be 
row vectors such that
\[A\boldsymbol{u}^t = \rho \boldsymbol{u}^t,
\quad
 \boldsymbol{v}A = \rho \boldsymbol{v},
 \quad
 \sum_i u_iv_i = 1,
 \quad
 \text{and}
 \quad \sum_i u_i = \\1,\]
 with \emph{$t$} denoting the transpose. Moreover,
 it is known, see
 \cite[p.203]{Athreya-Ney-72} that the coordinates of $\boldsymbol{u}$ 
 and
 $\boldsymbol{v}$
 are strictly positive. Then, from
\cite[Theorem 2, p.206]{Athreya-Ney-72}, rewritten in the form \cite[Lemma 2.1]{Rabehasaina-21}, there exists a real-valued random variable $W_\times$ such that
\begin{equation}
	\label{eq: limit ndim}
	\frac{\boldsymbol{Z}_{\times}(t)}{e^{\rho t}} \xrightarrow[t \to \infty]{a.s.}\boldsymbol{v} \cdot W_\times ,
\end{equation}
with $\cdot$  the usual multiplication, spelled out for better readability. We recall that $\boldsymbol{Z}_{\times}(t)$ is  the process without immigration started with random number of particles according to the law $\boldsymbol{I}$. 
For completeness, we also note that the equivalent of the functional equation \eqref{eq: functional 1d} is
\[
G_{\boldsymbol{Z}_{\times}}(\boldsymbol{s}  ) = G_{\boldsymbol{I}}\lbrb{G_{\boldsymbol{Z}^{(1)}_{\times}}(\boldsymbol{s}),
	\dots,
	G_{\boldsymbol{Z}^{(d)}_{\times}}(\boldsymbol{s})},
\]
with ${\boldsymbol{Z}^{(k)}_{\times}}$ a process without immigration
started with exactly one particle of type $k$.

Furthermore, a convenient fact is that Theorem \ref{thm: uni} holds when changing
everything to its multidimensional
equivalent (which we write in \textbf{bold}).

\begin{theorem}\label{thm: multi}
The statements of Theorem~\ref{thm: uni} extend  to the vector case by $s\to\boldsymbol{ s}$, $Z\to\boldsymbol Z$, in all regimes (subcritical, critical, supercritical).  The same holds if one replaces generating functions by Laplace transforms $G_{\boldsymbol Z}\to\mathcal L_{\boldsymbol Z}$ and $G_{\boldsymbol Z_\times}\to\mathcal L_{\boldsymbol Z_\times}$.
\end{theorem}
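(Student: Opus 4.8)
The plan is to repeat, \emph{mutatis mutandis}, the proof of Theorem~\ref{thm: uni}, taking advantage of the fact that the representation \eqref{eq: generating mu} was deliberately recorded in vector form. The crucial structural observation is that the point process $\Phi$ encoding the immigration epochs $T_i$ lives on $\R_+$ regardless of the number of types $d$; the multitype geometry of the population enters \emph{only} through the test function
\[
f_t(x)=-\ln\lbrb{G_{\boldsymbol{Z}_{\times}}\lbrb{t-x,\boldsymbol{s}}}\ind{x\leq t},
\]
which now carries the vector argument $\boldsymbol{s}$ and the vector-valued generating function $G_{\boldsymbol{Z}_{\times}}$ built from the construction of Section~\ref{sec: 3}. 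Hence, provided $f_t$ (and its Laplace analogue $g_t$ from \eqref{eq: Laplace mu}) lands in the admissible class of test functions of the relevant point process, one may insert it into the Laplace functional \eqref{eq: DPP Laplace}, \eqref{eq: Laplace Cox} or \eqref{eq: Laplace FPP} precisely as in the single-type argument.

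First I would check admissibility. For $\boldsymbol{s}\in[0,1]^d$ the probability generating function $G_{\boldsymbol{Z}_{\times}}(t-x,\boldsymbol{s})$ lies in $[0,1]$, so $f_t\geq 0$, while the factor $\ind{x\leq t}$ confines the support of $f_t$ to the bounded interval $[0,t]$; this compact support is exactly what the determinantal class demands, and nonnegative measurability suffices for the Cox and FPP cases. The identical reasoning disposes of $g_t$, now using $\mathcal{L}_{\boldsymbol{Z}_{\times}}(t-x,\boldsymbol{s})\in(0,1]$ for $\boldsymbol{s}$ with nonnegative coordinates.

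With admissibility secured, the three formulas drop out by substitution: feeding $f_t$ into \eqref{eq: DPP Laplace} reproduces \eqref{eq: Laplace DPP} with $G_{Z_{\times}}$ replaced by its bold counterpart, \eqref{eq: Laplace Cox} yields the average over the directing measure $\eta$, and \eqref{eq: Laplace FPP} the average over the inverse stable subordinator $Y_\beta$; exchanging $f_t$ for $g_t$ and $G_{\boldsymbol{Z}_{\times}}$ for $\mathcal{L}_{\boldsymbol{Z}_{\times}}$ produces the Laplace-transform statements. I would finally emphasise that \eqref{eq: generating mu} is an exact identity for every fixed $t$ and makes no reference to the sign of the Perron--Frobenius root $\rho$, so the clause ``in all regimes'' needs no separate treatment. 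There is, in truth, no substantive obstacle here: the entire content lies in recognising that the one-dimensionality of $\Phi$ decouples the immigration mechanism from the type structure, and the only point requiring genuine (if routine) care is the verification that $f_t$ and $g_t$ belong to the correct test-function class for each process.
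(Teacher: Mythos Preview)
Your proposal is correct and follows essentially the same approach as the paper. Indeed, the paper does not record a separate proof of Theorem~\ref{thm: multi} at all: the result is stated immediately after Theorem~\ref{thm: uni} and left to the reader, the implicit point being precisely the one you make explicit---that \eqref{eq: generating mu} and \eqref{eq: Laplace mu} were already written in vector form, so the substitution argument of Theorem~\ref{thm: uni} carries over verbatim. Your added verification that $f_t$ and $g_t$ are admissible test functions (nonnegative, compactly supported) is a welcome bit of hygiene that the paper omits.
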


Therefore, in theory, we can use results for the process without immigration $\boldsymbol{Z}_\times$
to obtain information about $\boldsymbol{Z}$.

As noted previously, the case of immigration generated by a Poisson random measure is
analysed   'in \cite{Mitov-Yanev-Hyrien-18-critical-multi,  Bojkova-Hyrien-Yanev-22, Bojkova-Hyrien-Yanev-23} and independently by Rabehasaina and Woo in \cite{Rabehasaina-21}. For
immigration times that form a renewal process, see
\cite{Slavtchova-Bojkova-96}. The latter may be applied in the case of an immigration
spanned by a FPP after checking that the used arguments can be modified to include the case of an infinite expectation
of the inter-arrival times.

As an illustrative example of how ideas from the case of Poisson immigration can be modified to
encompass the case of immigration spanned by a DPP, we generalise
\cite[Theorems 3, 4, and 5]{Rabehasaina-21} (since the Poisson point process is a DPP),
which are also available in \cite{Mitov-Yanev-Hyrien-18-critical-multi,  Bojkova-Hyrien-Yanev-22, Bojkova-Hyrien-Yanev-23}.
\section{Multitype branching processes with DPP immigration}
\subsection{Moments of the process}
\label{subsec: 4.3}
Let us recall that
for a vector
$\boldsymbol{Z}$,
we denote by $Z_i$ its $i$th coordinate. Then we have the following.\begin{theorem}\label{thm: moments}
	In the case of immigration spanned by a $(\Lambda, K)$-DPP,	\begin{enumerate}
		\item  we have that
		\[\Ebb{{Z}_i(t)} = \int_{(0,t]}
		K(x,x)\Ebb{Z_{\times,i}(t-x)}\Lambda(\D x),\]
		\item and if we assume \eqref{eq:strong},
        \begin{align*}
			Cov(Z_i(t), Z_j(t))=&
			\int_{(0,t]} K(x,x)
		 \Ebb{Z_{\times, i}(t-x) Z_{\times, j}(t-x)} \Lambda(\D x) \\
& \! - \int_{(0,t]^2} K^2(x,y) \Ebb{Z_{\times, i}(t-x)} \Ebb{Z_{\times, j}(t-y)} \Lambda(\D x) \Lambda(\D y).
		\end{align*}
	\end{enumerate}
	
\end{theorem}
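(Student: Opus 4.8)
The plan is to condition on the immigration point process $\Phi$ and exploit the conditional independence of the branching clans, and then apply the first- and second-order moment (Campbell) formulas specific to a DPP. Write $m_i(u):=\Ebb{Z_{\times,i}(u)}$ and $m_{ij}(u):=\Ebb{Z_{\times,i}(u)Z_{\times,j}(u)}$ for the first and mixed second moments of the immigration-free process started from an immigrant group $\boldsymbol I$; assumption \eqref{eq:strong} guarantees these are finite and locally bounded in $u$. Starting from the representation $\boldsymbol Z(t)=\sum_{k:T_k\le t}\boldsymbol Z^{(k)}_{\times}(t-T_k)$, where, given the atoms $\{x_k\}$ of $\Phi$, the clans $\boldsymbol Z^{(k)}_{\times}$ are iid copies started from independent immigrant groups, I would first record the conditional first moment $\Ebb{Z_i(t)\mid\Phi}=\sum_{k}m_i(t-x_k)\ind{x_k\le t}$.

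For the first item I would take expectations and invoke the mean-measure identity $\Ebb{\sum_k g(x_k)}=\int_{\R_+}g(x)\rho_1(x)\Lambda(\D x)$, where for a DPP $\rho_1(x)=\det(K(x,x))=K(x,x)$, so that the mean measure of $\Phi$ is $K(x,x)\Lambda(\D x)$. Applied to $g(x)=m_i(t-x)\ind{x\le t}$ this gives the claimed $\Ebb{Z_i(t)}=\int_{(0,t]}K(x,x)\Ebb{Z_{\times,i}(t-x)}\Lambda(\D x)$ at once.

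For the covariance I would expand $Z_i(t)Z_j(t)=\sum_{k,l}Z^{(k)}_{\times,i}(t-x_k)Z^{(l)}_{\times,j}(t-x_l)\ind{x_k\le t}\ind{x_l\le t}$ and split the double sum into its diagonal ($k=l$) and off-diagonal ($k\ne l$) parts before conditioning on $\Phi$. On the diagonal a single clan contributes the mixed moment $m_{ij}(t-x_k)$, while off the diagonal the independence of distinct clans yields the product $m_i(t-x_k)m_j(t-x_l)$. Taking expectations, the diagonal term becomes $\int_{(0,t]}m_{ij}(t-x)K(x,x)\Lambda(\D x)$ via the first-order formula, and the off-diagonal term is handled by the second-order Campbell identity $\Ebb{\sum_{k\ne l}h(x_k,x_l)}=\int\!\!\int h(x,y)\rho_2(x,y)\Lambda(\D x)\Lambda(\D y)$, where the determinantal structure and symmetry of $K$ give $\rho_2(x,y)=\det\begin{pmatrix}K(x,x)&K(x,y)\\K(y,x)&K(y,y)\end{pmatrix}=K(x,x)K(y,y)-K(x,y)^2$. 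Subtracting $\Ebb{Z_i(t)}\Ebb{Z_j(t)}$, which by the first item equals $\int_{(0,t]^2}m_i(t-x)m_j(t-y)K(x,x)K(y,y)\Lambda(\D x)\Lambda(\D y)$, the two $K(x,x)K(y,y)$ contributions cancel, leaving precisely the diagonal term together with $-\int_{(0,t]^2}K^2(x,y)\Ebb{Z_{\times,i}(t-x)}\Ebb{Z_{\times,j}(t-y)}\Lambda(\D x)\Lambda(\D y)$.

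The main obstacle is not the algebra but justifying the interchanges of expectation, summation and integration, i.e.\ the applicability of the Campbell formulas to the unbounded test functions $g$ and $h$ built from $m_i,m_{ij}$ on the finite horizon $(0,t]$, and matching the paper's convention for $\rho_k$ (factorial-moment densities over distinct points) to the diagonal/off-diagonal split. Here assumption \eqref{eq:strong}, together with local finiteness of $\Lambda$ and the standing condition $\int_D K(x,x)\Lambda(\D x)<\infty$ on bounded $D$, ensures all the integrals converge and that Fubini applies; it is the determinantal form of $\rho_2$ that converts the naive product term into a subtraction, producing the $K^2$ term that \emph{distinguishes} DPP immigration from the Poisson case, in which $\rho_2\equiv1$ and the second integral disappears.
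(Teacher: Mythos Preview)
Your argument is correct and complete; the Campbell/factorial-moment approach yields exactly the stated formulas, and your identification of $\rho_2(x,y)=K(x,x)K(y,y)-K(x,y)^2$ together with the diagonal/off-diagonal split is the right mechanism.

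It is, however, a genuinely different route from the paper's. The paper does not condition on $\Phi$ at all: it starts from the closed-form expansion \eqref{eq: generating DPP} of the generating function $G_{\boldsymbol Z}(t,\boldsymbol s)$ as an alternating sum over $n\ge0$ of $n$-fold integrals against $D(\boldsymbol x)=\det(K(x_i,x_j))$, differentiates under the sum and integrals, and observes that at $\boldsymbol s=\boldsymbol 1$ only the $n=1$ term survives for the mean and only the $n=1,2$ terms survive for the mixed second moment (every higher term retains a factor $1-G_{\boldsymbol Z_\times}(t-x_i,\boldsymbol 1)=0$). The bulk of the paper's proof is the justification of these interchanges via Hadamard's inequality $D(\boldsymbol x)\le\prod_iK(x_i,x_i)$ and dominated convergence. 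Your approach is more probabilistic and arguably shorter, since the Campbell identities package exactly the $n=1$ and $n=2$ contributions you need; the paper's approach is more in keeping with its Laplace-functional framework and generalises mechanically to higher factorial moments by retaining more terms of the series. Both require the same integrability inputs (local boundedness of $m_i,m_{ij}$ from \eqref{eq:strong}, and $\int_{(0,t]}K(x,x)\Lambda(\D x)<\infty$), so neither is strictly stronger.
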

\begin{proof}
This follows by differentiating the generating function of the process, derived as the multidimensional version of \eqref{eq: Laplace DPP}. We provide the complete calculation in Section \ref{sec: proof moments}.
	
\end{proof}
\subsection{Asymptotics}
\begin{theorem} \label{thm: DPP}
	Let $\rho$ be the Perron-Frobenius root of the matrix $A$. In the case of immigration spanned by a $(\Lambda, K)$-DPP $\Phi$ with kernel
	$K$ such that
	$\int_{(0,\infty)}e^{-\rho x}K(x,x)\Lambda(\D x)$
    is finite,
	there exists an $\R^d$-valued random variable $\boldsymbol{W}$
	such that
	\begin{equation}\label{eq: Laplace DPP sub}
	\frac{\boldsymbol{Z}(t)}{e^{\rho t}} \xrightarrow[t \to \infty]{d} \boldsymbol{W}.
	\end{equation}
	Moreover,
	\[
	\boldsymbol{W}\stackrel{d}{=}
	\sum_i
	\boldsymbol{v}\cdot W^{(i)}_{\times}e^{-\rho T_i},
    \quad
    \text{and}
    \quad \Ebb{\exp\lbrb{-\langle\boldsymbol{W},\boldsymbol{s}\rangle}}=
    \mathcal{L}_{\Phi}
	\lbrb{-\ln\lbrb{\mathcal{L}_{\boldsymbol{v}W_\times
			}\lbrb{\boldsymbol{s}e^{-\rho x}}}},
	\]
	where $W^{(i)}_{\times}$ are  iid
	copies of $W_{\times}$
	and $T_i$ being the
	atoms of $\Phi$.
\end{theorem}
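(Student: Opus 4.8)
The plan is to work entirely at the level of multivariate Laplace transforms and read off the limit directly from the determinantal series \eqref{eq: DPP Laplace}. Fix $\boldsymbol{s}\geq\boldsymbol{0}$ componentwise. Applying \eqref{eq: generating mu}--\eqref{eq: Laplace mu} at the argument $\boldsymbol{s}e^{-\rho t}$ gives
\[
\Ebb{e^{-\langle\boldsymbol{Z}(t),\,\boldsymbol{s}e^{-\rho t}\rangle}}=\mathcal{L}_\Phi(g_t),\qquad g_t(x):=-\ln\lbrb{\mathcal{L}_{\boldsymbol{Z}_\times}\lbrb{t-x,\,\boldsymbol{s}e^{-\rho t}}}\ind{x\leq t},
\]
and specialising the DPP functional makes the right-hand side equal to $1+\sum_{n\geq1}\frac{(-1)^n}{n!}\int_{(0,t]^n}D(\boldsymbol{x})\prod_{i=1}^n\lbrb{1-\mathcal{L}_{\boldsymbol{Z}_\times}(t-x_i,\boldsymbol{s}e^{-\rho t})}\Lambda(\D\boldsymbol{x})$. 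First I would pin down the pointwise limit of the integrand: for fixed $x$, $\langle\boldsymbol{Z}_\times(t-x),\boldsymbol{s}e^{-\rho t}\rangle=e^{-\rho x}\langle\boldsymbol{Z}_\times(t-x)e^{-\rho(t-x)},\boldsymbol{s}\rangle\to e^{-\rho x}W_\times\langle\boldsymbol{v},\boldsymbol{s}\rangle$ almost surely by \eqref{eq: limit ndim}, so bounded convergence yields $\mathcal{L}_{\boldsymbol{Z}_\times}(t-x,\boldsymbol{s}e^{-\rho t})\to\mathcal{L}_{\boldsymbol{v}W_\times}(\boldsymbol{s}e^{-\rho x})$, i.e.\ $g_t(x)\to g_\infty(x):=-\ln\mathcal{L}_{\boldsymbol{v}W_\times}(\boldsymbol{s}e^{-\rho x})$.

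The hard part will be justifying the interchange of the limit $t\to\infty$ with the infinite sum and the $n$-fold integrals, the more so as \eqref{eq: DPP Laplace} is only guaranteed for compactly supported test functions while the limit $g_\infty$ has full support. I would obtain this through a uniform-in-$t$ domination. Using $1-e^{-u}\leq u$ and the fact that $\mathcal{L}_{\boldsymbol{Z}_\times}(t-x,\cdot)$ is an expectation,
\[
1-\mathcal{L}_{\boldsymbol{Z}_\times}(t-x,\boldsymbol{s}e^{-\rho t})\leq e^{-\rho t}\langle\Ebb{\boldsymbol{Z}_\times(t-x)},\boldsymbol{s}\rangle,
\]
and the Perron--Frobenius bound $\|e^{As}\|\leq Ce^{\rho s}$ for $s\geq0$ (valid since $A$ is irreducible with simple dominant eigenvalue $\rho$) forces the left side to be $\leq C'e^{-\rho x}$ uniformly in $t$. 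Combined with Hadamard's inequality for the non-negative definite kernel, $D(\boldsymbol{x})=\det(K(x_i,x_j))_{i,j}\leq\prod_{i}K(x_i,x_i)$, the $n$-th summand is bounded in absolute value by
\[
\frac{1}{n!}\lbrb{\int_{(0,\infty)}e^{-\rho x}K(x,x)\Lambda(\D x)}^{n}(C')^{n}=\frac{(C'M)^{n}}{n!},
\]
where $M<\infty$ is exactly the standing hypothesis. This summable majorant licenses dominated convergence for the series and for each integral, so that $\lim_t\mathcal{L}_\Phi(g_t)=1+\sum_{n\geq1}\frac{(-1)^n}{n!}\int_{(0,\infty)^n}D(\boldsymbol{x})\prod_{i}\lbrb{1-\mathcal{L}_{\boldsymbol{v}W_\times}(\boldsymbol{s}e^{-\rho x_i})}\Lambda(\D\boldsymbol{x})=\mathcal{L}_\Phi(g_\infty)$, the bound simultaneously showing that this extended functional is well defined on $g_\infty$.

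It remains to conclude convergence in distribution and to identify the representation. Since the limit transform is continuous at $\boldsymbol{s}=\boldsymbol{0}$ (it equals $1$ there), the continuity theorem for Laplace transforms of $\R^{d}_{+}$-valued vectors gives \eqref{eq: Laplace DPP sub} with $\Ebb{e^{-\langle\boldsymbol{W},\boldsymbol{s}\rangle}}=\mathcal{L}_\Phi(g_\infty)$, which is the stated formula. To recognise the mixture form, I would compute the transform of $\sum_i\boldsymbol{v}\cdot W^{(i)}_\times e^{-\rho T_i}$ by conditioning on $\Phi$: as the $W^{(i)}_\times$ are i.i.d.\ copies of $W_\times$ given the atoms $T_i$, one gets $\Ebb{e^{-\langle\sum_i\boldsymbol{v}W^{(i)}_\times e^{-\rho T_i},\,\boldsymbol{s}\rangle}\mid\Phi}=\prod_i\mathcal{L}_{\boldsymbol{v}W_\times}(\boldsymbol{s}e^{-\rho T_i})=\exp\lbrb{-\sum_i g_\infty(T_i)}$, and averaging over $\Phi$ returns $\mathcal{L}_\Phi(g_\infty)$ by the definition \eqref{def: Laplace} of the Laplace functional. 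Matching the two transforms yields the distributional identity $\boldsymbol{W}\stackrel{d}{=}\sum_i\boldsymbol{v}\cdot W^{(i)}_\times e^{-\rho T_i}$, completing the proof.
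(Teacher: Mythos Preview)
Your proposal is correct and follows essentially the same route as the paper: express the Laplace transform of $e^{-\rho t}\boldsymbol{Z}(t)$ through the DPP series, identify the pointwise limit of each factor via \eqref{eq: limit ndim}, dominate uniformly in $t$ using Hadamard's inequality together with an exponential bound on $1-\mathcal{L}_{\boldsymbol{Z}_\times}(t-x,\boldsymbol{s}e^{-\rho t})$, and then pass to the limit by dominated convergence at both the integral and series levels.

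The only substantive difference is the source of the bound $1-\mathcal{L}_{\boldsymbol{Z}_\times}(t-x,\boldsymbol{s}e^{-\rho t})\leq C'e^{-\rho x}$. The paper obtains it from the martingale property of $\langle e^{-\rho t}\boldsymbol{Z}_\times(t),\boldsymbol{v}\rangle$, which yields the explicit constant $C_{\boldsymbol{s}}=(\max_j s_j/\min_j v_j)\,\Ebb{\langle\boldsymbol{I},\boldsymbol{v}\rangle}$ without any appeal to spectral theory. Your matrix-norm estimate $\|e^{As}\|\leq Ce^{\rho s}$ is also valid here (irreducibility of $A$ forces $\rho$ to be simple and every other eigenvalue to have strictly smaller real part), but it is slightly less self-contained. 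For the identification of $\boldsymbol{W}$, you condition on $\Phi$ and recognise $\mathcal{L}_\Phi(g_\infty)$ directly, whereas the paper quotes the marked-point-process construction; these are the same computation phrased differently. Your explicit remarks on continuity at $\boldsymbol{s}=\boldsymbol{0}$ and on extending the DPP functional beyond compactly supported test functions are points the paper leaves implicit.
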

The proof of the last theorem is presented in Section \ref{sec: proof DPP1}.

For the next results, to introduce some regularity of the DPP, we assume it is \emph{stationary}, meaning that for any \( t \in \mathbb{R} \) and \( \boldsymbol{x} \in \mathbb{R}^n \),
\begin{equation}\label{eq: stationary}
	D(\boldsymbol{x}+t) = D(\boldsymbol{x}),
\end{equation}
where \( \boldsymbol{x} + t := (x_1 + t, \dots, x_n + t) \); see \cite[Chapter 5.6]{Baccelli-Blaszczyszyn-Karray-24}. This property holds, for example, if the kernel is shift-invariant, \( K(x, y) = K(x + t, y + t) =: C(x - y) \), as in the case of the Poisson kernel, \( K(x, y) = \ind{x=y} \), or the Ginibre (Gaussian) kernel: \( K(x, y) = \exp\left( -(x-y)^2/2 \right) / \pi \). A result by Lavancier et al. \cite[Theorem 1]{Lavancier-15} states that \( C \) can be any continuous correlation function with eigenvalues in its spectral representation bounded by 1, with examples provided in their work.

% \begin{proposition}
	% \begin{enumerate}
		%     \item We have that $\Ebb{Z_i(t)} \sim $
		% \end{enumerate}
	
	% \end{proposition}
    
Before stating our next result, we introduce the asymptotic equivalence notation, provided
$g(t)$ is non-zero for sufficiently large $t$,
    \[
    f(t)\simi g(t)
    \quad
    \text{if}
    \quad
    \lim_{t\to\infty}\frac{f(t)}{g(t)} = 1.
    \]
\begin{theorem}\label{thm: DPP2}
	Assume that immigration 
	is spanned by a
	stationary $(\Lambda,K)$-DPP
	with $\Lambda(\D x) = \lambda(x) \D x$,
	and that $\lambda(x)\simi\lambda_\infty e^{\delta x}$ for some $\lambda_\infty > 0$ and $\delta \in \R$.

	\begin{enumerate}
		\item \label{it:1} Under \eqref{eq:strong}, 
		if $\delta> \max\{\rho,0\}$, 
		then 
		\[
		\frac{{Z_i}(t)}{e^{\delta t}}\xrightarrow[t \to \infty]{L^2} A_i:=
 K_\ast \lambda_\infty\int_0^\infty e^{-\delta x}\Ebb{Z_{\times,i}(x)}\D x,
		\]
        where $K_\ast:=K(0,0)$.
		\item \label{it:2} Under \eqref{eq:strong}, 
		if the process is \textbf{supercritical}, that is, 
		$
		\rho>0$, and  $\delta = \rho$, then
		\[\frac{{Z_i}(t)}{te^{\delta t}}
		\xrightarrow[t\to\infty]{L^2} A_i':=K_\ast \lambda_\infty 
        \langle \boldsymbol{u},
	\Ebb{\boldsymbol{I}}\rangle v_i.\]

		\item  
		\label{it:3} 
		If the process is \textbf{critical}, that is		$
		\rho=0$, $\Ebb{||\boldsymbol{\nu}||^2}$ is finite, $\lambda$ is bounded, $\delta = \rho$, and the covariance function $K(x, 0)$ of the DPP tends to $0$ as $x\to \infty$. Then
		\[\frac{\boldsymbol{Z}(t) }{t}\xrightarrow[t\to\infty]{d} 
		Y
		\boldsymbol{v}  ,\]  where $Y\sim \Gamma(K(0,0)\lambda_\infty \beta,1/Q)$ with
\[
Q := \frac{1}{2} \sum_{i, j, k = 1}^d \left. \frac{\partial^2 G_{\nu_i}}{\partial x_j \partial x_k} \right|_{\boldsymbol{x} = \boldsymbol{1}} \mu_i^{-1}v_i u_j u_k,
    \qquad
	\beta :=
	\frac{\langle\boldsymbol{u}, \Ebb{\boldsymbol{I}}\rangle}{Q},
	%  \sum_{i=1}^d
	% \Ebb{\boldsymbol{I}_j}{\boldsymbol{u}_j}
\]
    and $\Gamma(\alpha, \beta)$ is
		the Gamma law with shape $\alpha$
		and rate $\beta$, that is,
		its density is, for $x >0$,
		\[
		f_{\alpha,\beta}(x) = 
		\frac{\beta^\alpha}{\Gamma(\alpha)}x^{\alpha-1}e^{-\beta x}.
		\]
		\item \label{it:4}
		If the process is \textbf{subcritical}, that is, 
		$
		\rho<0$, and $\delta = 0$, then
		\[
		\boldsymbol{Z}(t) \xrightarrow[t\to\infty]{d} \boldsymbol{X},
		\]where $\boldsymbol{X}$ is a random variable whose Laplace transform is available in
		\eqref{eq:Laplace limit}.

	\end{enumerate}
\end{theorem}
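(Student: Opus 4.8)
The plan is to split the four regimes into two groups handled by different tools. The two $L^2$ statements (items~\ref{it:1} and~\ref{it:2}) assert convergence to a constant, so I would establish them directly from the first two moments furnished by Theorem~\ref{thm: moments}. The two distributional limits (items~\ref{it:3} and~\ref{it:4}) have genuinely random limits, so there I would work with the Laplace-transform form of the determinantal generating function \eqref{eq: Laplace DPP}, that is its multitype analogue with $G_{\boldsymbol Z_\times}$ replaced by $\mathcal L_{\boldsymbol Z_\times}$ as in Theorem~\ref{thm: multi}, and recognise the limiting transform, concluding by L\'evy continuity.

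For item~\ref{it:1} I would start from $\Ebb{Z_i(t)}=\int_{(0,t]}K(x,x)\Ebb{Z_{\times,i}(t-x)}\Lambda(\D x)$, use stationarity to replace $K(x,x)$ by the constant $K_\ast=K(0,0)$, substitute $u=t-x$, and divide by $e^{\delta t}$. Feeding in the Perron--Frobenius asymptotics of the mean matrix $e^{Au}$ from \eqref{eq:def matrix A}, namely $\Ebb{\boldsymbol Z_\times(u)}=\Ebb{\boldsymbol I}e^{Au}\simi e^{\rho u}\langlerangle{\boldsymbol u,\Ebb{\boldsymbol I}}\boldsymbol v$, together with $\lambda(t-u)\simi\lambda_\infty e^{\delta(t-u)}$, the hypothesis $\delta>\rho$ makes $e^{-\delta u}\Ebb{Z_{\times,i}(u)}$ integrable and a dominated-convergence (Abelian) argument yields the mean $A_i$. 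To upgrade this to $L^2$ convergence I would show the normalised variance vanishes: inserting the same asymptotics and the supercritical second-moment growth $\Ebb{Z_{\times,i}(u)Z_{\times,j}(u)}=\bo{e^{2\rho u}}$ into the covariance formula of Theorem~\ref{thm: moments} and dividing by $e^{2\delta t}$, both the diagonal term and the $K^2$ double integral are seen to be $\bo{e^{-2(\delta-\rho)t}}+\bo{e^{-\delta t}}\to0$. Item~\ref{it:2} is the boundary case $\delta=\rho>0$: here the rescaled integrand tends to a nonzero constant instead of being integrable, so $\int_0^t$ grows linearly, explaining the extra factor $t$ in the normalisation $te^{\delta t}$ and producing $A_i'=K_\ast\lambda_\infty\langlerangle{\boldsymbol u,\Ebb{\boldsymbol I}}v_i$, while the variance, now divided by $t^2e^{2\rho t}$, is $\bo{1/t^2}$.

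For the critical case (item~\ref{it:3}) I would evaluate the determinantal Laplace transform at $\boldsymbol s/t$ to obtain $\Ebb{\exp\lbrb{-\langlerangle{\boldsymbol s,\boldsymbol Z(t)/t}}}$ and pass to the limit term by term in the series. The crucial analytic input is the classical critical-branching asymptotic: using the functional relation $\mathcal L_{\boldsymbol Z_\times}=G_{\boldsymbol I}\lbrb{\mathcal L_{\boldsymbol Z^{(1)}_\times},\dots,\mathcal L_{\boldsymbol Z^{(d)}_\times}}$ and the quadratic expansion of the offspring generating functions at $\boldsymbol 1$ (this is where $\Ebb{||\boldsymbol\nu||^2}<\infty$ and the constant $Q$ enter through projection onto the Perron direction $\boldsymbol v$), one shows that for $w\in(0,1)$, $t\lbrb{1-\mathcal L_{\boldsymbol Z_\times}(wt,\boldsymbol s/t)}\longrightarrow \frac{\langlerangle{\boldsymbol u,\Ebb{\boldsymbol I}}\langlerangle{\boldsymbol v,\boldsymbol s}}{1+Qw\langlerangle{\boldsymbol v,\boldsymbol s}}$. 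Substituting $w=u/t$ and using $K(x,x)=K_\ast$ and $\lambda\to\lambda_\infty$, the $n=1$ term then converges to $-K_\ast\lambda_\infty\langlerangle{\boldsymbol u,\Ebb{\boldsymbol I}}\int_0^1\frac{\langlerangle{\boldsymbol v,\boldsymbol s}}{1+Qw\langlerangle{\boldsymbol v,\boldsymbol s}}\D w$, which is a logarithm; exponentiating reproduces exactly the Gamma transform $\lbrb{1+Q\langlerangle{\boldsymbol v,\boldsymbol s}}^{-K_\ast\lambda_\infty\beta}$ of $Y\boldsymbol v$ with $\beta=\langlerangle{\boldsymbol u,\Ebb{\boldsymbol I}}/Q$. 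The main obstacle, and the genuinely DPP-specific part, is to show that all terms $n\ge2$ are negligible: here I would exploit the decay $K(x,0)\to0$ to argue that the off-diagonal determinantal correlations decorrelate over the growing window $(0,t]$, so that asymptotically the DPP contributes only through its diagonal intensity $K_\ast$ and the process behaves like the corresponding Poisson immigration. Bounding this vanishing uniformly enough to interchange limit and summation is the hard step.

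Finally, item~\ref{it:4} ($\rho<0$, $\delta=0$) is the most direct. Substituting $u_i=t-x_i$ in the determinantal Laplace series and using that $1-\mathcal L_{\boldsymbol Z_\times}(u,\boldsymbol s)\le\langlerangle{\boldsymbol s,\Ebb{\boldsymbol Z_\times(u)}}$ is integrable in $u$ (subcriticality forces $\Ebb{\boldsymbol Z_\times(u)}\simi e^{\rho u}\to0$), together with $\lambda(t-u)\to\lambda_\infty$ and the stationarity of $D$, dominated convergence gives convergence of each integral and, after a uniform bound on the series tail, of the whole transform to the determinantal functional over $[0,\infty)$ of $u\mapsto1-\mathcal L_{\boldsymbol Z_\times}(u,\boldsymbol s)$; this is the expression~\eqref{eq:Laplace limit}, and L\'evy continuity yields $\boldsymbol Z(t)\xrightarrow{d}\boldsymbol X$. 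Across all four items the recurring technical point is justifying the passage of the limit inside the infinite determinantal sum; this is routine in items~\ref{it:1},~\ref{it:2} and~\ref{it:4} but is the crux in the critical case~\ref{it:3}.
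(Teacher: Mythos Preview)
Your overall architecture matches the paper's: items~\ref{it:1} and~\ref{it:2} via the moment formulas of Theorem~\ref{thm: moments} and the ratio $\Var(Z_i(t))/\Ebb{Z_i(t)}^2\to0$, items~\ref{it:3} and~\ref{it:4} by passing to the limit term-by-term in the determinantal Laplace series after the change of variables $x_i\mapsto t-x_i$ (resp.\ $x_i\mapsto tx_i$) together with stationarity of $D$ and dominated convergence. For items~\ref{it:1},~\ref{it:2},~\ref{it:4} your sketch is essentially the paper's proof.

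In item~\ref{it:3}, however, your description of the ``main obstacle'' is misleading and, read literally, wrong. The terms $n\ge2$ are \emph{not} negligible: each survives in the limit and is precisely what assembles the exponential series that produces the power form of the Gamma transform. Concretely, after $x_i\mapsto tx_i$ the $n$th term reads
\[
\frac{(-1)^n}{n!}\int_{(0,1)^n} D(t\boldsymbol x)\prod_{i=1}^n t\bigl(1-\mathcal L_{\boldsymbol Z_\times}(t(1-x_i),\boldsymbol s/t)\bigr)\,\lambda(t\boldsymbol x)\,\D\boldsymbol x .
\]
The hypothesis $K(x,0)\to0$ is used not to kill these terms but to show that, for $\boldsymbol x$ with pairwise distinct coordinates, the off-diagonal entries $K(tx_i,tx_j)=K\bigl(t|x_i-x_j|,0\bigr)$ tend to $0$, so that $D(t\boldsymbol x)\to K_\ast^n$ a.e.\ and the $n$-fold integral \emph{factorises} into the $n$th power of the one-dimensional integral. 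Combined with your (correct) asymptotic for $t(1-\mathcal L_{\boldsymbol Z_\times})$ and $\lambda(tx_i)\to\lambda_\infty$, the $n$th term converges to $\frac{(-1)^n}{n!}L^n$ with
\[
L=K_\ast\lambda_\infty\beta\int_0^1\frac{\langle\boldsymbol s,\boldsymbol v\rangle}{1/Q+(1-x)\langle\boldsymbol s,\boldsymbol v\rangle}\,\D x
=K_\ast\lambda_\infty\beta\,\ln\bigl(1+Q\langle\boldsymbol s,\boldsymbol v\rangle\bigr),
\]
and the full series sums to $e^{-L}=\bigl(1+Q\langle\boldsymbol s,\boldsymbol v\rangle\bigr)^{-K_\ast\lambda_\infty\beta}$. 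In other words, the decorrelation hypothesis makes the DPP asymptotically Poisson (as you say), and it is the Poisson Laplace functional, with all its $n\ge2$ terms intact, that exponentiates the $n=1$ contribution. The genuine technical point is therefore to justify the interchange of limit and sum/integral (Hadamard's bound $D(t\boldsymbol x)\le K_\ast^n$, boundedness of $\lambda$, and the uniform bound on $t(1-\mathcal L_{\boldsymbol Z_\times})$ handle this via dominated convergence), not to make the higher terms disappear.
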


\begin{remark}
	\label{rem: eigen} The result of item \ref{it:3} in the last theorem in the case of Poisson immigration is established  
	in \cite[Theorem 8]{Mitov-Yanev-Hyrien-18-critical-multi} and, under
the stronger assumption that all
	moments of $I_i$ exist, in 
	\cite[Theorem 3]{Rabehasaina-21}.  After a close inspection, we have observed that the two in fact do not match. This is due to
two errata: in the proof of \cite[Theorem 8]{Mitov-Yanev-Hyrien-18-critical-multi}, there is a missed $r^\ast$ on the second line of page 221. The error in \cite[Theorem 3]{Rabehasaina-21} is due to an incorrect
application to the result of Weiner \cite{Weiner-70}.
	In his work, the eigenvectors $\boldsymbol{u}_W$ 
	and $\boldsymbol{v}_W$ are defined as, respectively, the right and left eigenvector of the matrix composed of $\Ebb{(\nu_{i})_j}-\ind{i=j}$, and not as the eigenvectors of the generator
	matrix $A$, see \eqref{eq:def matrix A}. The normalisation is
 $\langle \boldsymbol{u}_W, \boldsymbol{1}\rangle = 1$ and 
 $\langle \boldsymbol{u}_W, \boldsymbol{v}_W\rangle = 1$.
	It is then direct that,
	\begin{equation}
		\label{eq: transfer}
  u_{W,i}:=(u_{W})_i = u_i, 
  \quad \!
  v_{W,i} = \frac{1}{\sum_j (u_j v_j/ \mu_j)}
  \frac{v_i}{\mu_i},
  \quad \!
  \text{and}
  \quad\!
  \sum_i \mu_i u_{W,i} v_{W,i} = \frac{1}{\sum_i (u_i v_i/\mu_i)}. 
	\end{equation}
	The definition of $\boldsymbol{u}$ and
	$\boldsymbol{v}$ in this work is aligned with 
	these in
	Sevastyanov \cite{Sevastyanov-71} and
	Athreya and Ney
	\cite{Athreya-Ney-72}.
\end{remark}
The theorem is proved in
Section \ref{sec: proof DPP2}. For the proof of the first two items, we will need the following  estimates.
\begin{lemma}\label{lemma 2}
Under the assumptions of 
Theorem \ref{thm: DPP2} and \eqref{eq:strong},
\begin{enumerate}
    \item if $\delta>\rho$, then
for some $A_i \in \Rb_+$,
\[
\Ebb{Z_i(t)}\simi A_ie^{\delta t},
\quad
\text{and}
\quad
\Var(Z_{i}(t)) = 
	\begin{cases}
	\bo{e^{\max\{2\rho, \delta\} t}} & \text{if $\rho > 0$ and $\delta\neq2 \rho$},\\
		\bo{t e^{\delta t}} & \text{if $2\rho=\delta > 0$}.
	\end{cases}
\]
\item if $\delta=\rho>0$, then for some $A_i'\in \Rb_+$,
\[
\Ebb{Z_i(t)}\simi A'_ite^{\delta t},
\quad
\text{and}
\quad
\Var(Z_{i}(t)) = 
		\bo{e^{2\delta t}}.
\]
\end{enumerate}

\end{lemma}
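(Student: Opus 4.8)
The plan is to run everything off the exact moment identities of Theorem~\ref{thm: moments}, specialised to the stationary DPP, feeding in the first- and second-moment asymptotics of the immigration-free process $\boldsymbol{Z}_\times$. Two inputs about $\boldsymbol{Z}_\times$ are needed. First, by the Perron--Frobenius asymptotics of the mean matrix $e^{At}$ (equivalently, as the mean version of \eqref{eq: limit ndim}), one has $e^{-\rho s}\Ebb{Z_{\times,i}(s)}\to C_i:=\langle\boldsymbol{u},\Ebb{\boldsymbol{I}}\rangle v_i>0$. Second, in the supercritical regime $\rho>0$, assumption \eqref{eq:strong} upgrades \eqref{eq: limit ndim} to $L^2$-convergence, so that $e^{-2\rho s}\Ebb{Z_{\times,i}^2(s)}$ stays bounded, i.e. $\Ebb{Z_{\times,i}^2(s)}=\bo{e^{2\rho s}}$. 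Since the DPP is stationary, taking $n=1$ in \eqref{eq: stationary} forces the diagonal $K(x,x)\equiv K_\ast$, and with $\Lambda(\D x)=\lambda(x)\D x$ the first identity of Theorem~\ref{thm: moments}, after the change of variables $s=t-x$, becomes the convolution
\[
\Ebb{Z_i(t)} = K_\ast \int_0^t \Ebb{Z_{\times,i}(s)}\,\lambda(t-s)\,\D s .
\]

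For the mean, I would set $g(u):=e^{-\delta u}\lambda(u)$, which is bounded and satisfies $g(u)\to\lambda_\infty$, and write $e^{-\delta t}\Ebb{Z_i(t)} = K_\ast\int_0^t \bigl(e^{-\delta s}\Ebb{Z_{\times,i}(s)}\bigr) g(t-s)\,\D s$. In item~\ref{it:1} ($\delta>\rho$) the bracketed factor decays like $e^{(\rho-\delta)s}$ and is therefore integrable on $\R_+$, while $g$ is bounded, so dominated convergence gives $e^{-\delta t}\Ebb{Z_i(t)}\to K_\ast\lambda_\infty\int_0^\infty e^{-\delta s}\Ebb{Z_{\times,i}(s)}\,\D s=A_i$, exactly as claimed. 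In item~\ref{it:2} ($\delta=\rho$) the bracket instead converges to $C_i$ and is no longer integrable; dividing by $t$ and splitting the convolution at $t/2$ (on $[0,t/2]$ the factor $g(t-s)\to\lambda_\infty$ uniformly, while on $[t/2,t]$ the bracket $\to C_i$ uniformly) yields, by a Ces\`aro-type argument, $\tfrac1t\int_0^t(\cdots)\,\D s\to C_i\lambda_\infty$, i.e. $A_i'=K_\ast\lambda_\infty\langle\boldsymbol{u},\Ebb{\boldsymbol{I}}\rangle v_i$.

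For the variance I would exploit a sign trick that avoids analysing the double integral in Theorem~\ref{thm: moments} at all. Putting $i=j$, the subtracted term has a nonnegative integrand (since $K^2\ge0$ and the means are nonnegative), so
\[
0 \le \Var(Z_i(t)) \le T_1 := K_\ast \int_0^t \Ebb{Z_{\times,i}^2(s)}\,\lambda(t-s)\,\D s .
\]
Inserting $\Ebb{Z_{\times,i}^2(s)}=\bo{e^{2\rho s}}$ and $\lambda(t-s)=\bo{e^{\delta(t-s)}}$ gives $T_1=\bo{e^{\delta t}\int_0^t e^{(2\rho-\delta)s}\,\D s}$, and evaluating the elementary integral in the three regimes $2\rho<\delta$, $2\rho>\delta$, $2\rho=\delta$ produces the bound $\bo{e^{\delta t}}$, $\bo{e^{2\rho t}}$, $\bo{t e^{\delta t}}$ respectively. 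This is precisely the $\bo{e^{\max\{2\rho,\delta\}t}}$ versus $\bo{t e^{\delta t}}$ dichotomy of item~\ref{it:1}; and for item~\ref{it:2}, with $\delta=\rho$ so that $2\rho-\delta=\delta>0$, the integral is of order $e^{\delta t}$ and hence $T_1=\bo{e^{2\delta t}}$.

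The only genuinely delicate points are (a) turning the pointwise equivalence $\lambda\simi\lambda_\infty e^{\delta\cdot}$ into limits of the convolution integrals, which is routine by dominated convergence in item~\ref{it:1} but needs the split-at-$t/2$ Ces\`aro argument in item~\ref{it:2}, and (b) justifying $\Ebb{Z_{\times,i}^2(s)}=\bo{e^{2\rho s}}$, for which I rely on the $L^2$ (Kesten--Stigum-type) strengthening of \eqref{eq: limit ndim} available under \eqref{eq:strong} when $\rho>0$. Everything else is bookkeeping; in particular the inequality $\Var(Z_i(t))\le T_1$ is what lets us dispense with the second moment measure entirely, so the whole lemma reduces to the behaviour of a single scalar convolution of $\Ebb{Z_{\times,i}}$ (resp. $\Ebb{Z_{\times,i}^2}$) against $\lambda$.
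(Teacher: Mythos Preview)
Your proposal is correct and follows essentially the same route as the paper: both use stationarity to reduce Theorem~\ref{thm: moments} to a scalar convolution $K_\ast\int_0^t\Ebb{Z_{\times,i}(t-x)}\lambda(x)\,\D x$, drop the nonpositive double-integral term to bound the variance by $K_\ast\int_0^t\Ebb{Z_{\times,i}^2(t-x)}\lambda(x)\,\D x$, and then read off the asymptotics from those of $\Ebb{Z_{\times,i}}$ and $\Ebb{Z_{\times,i}^2}$. The only cosmetic differences are that the paper outsources the convolution asymptotics to a quoted lemma (their Lemma~\ref{lem: lemma 23}) rather than doing your DCT/Ces\`aro split by hand, and cites the textbook asymptotic $\Ebb{Z_{\times,i}^2(s)}\simi B_{3,i}e^{2\rho s}$ instead of invoking $L^2$-convergence; one small slip in your write-up is asserting that $g(u)=e^{-\delta u}\lambda(u)$ is globally bounded, which the hypothesis $\lambda\simi\lambda_\infty e^{\delta\cdot}$ does not give near $u=0$---you should instead use local integrability of $\lambda$ on the short end of the convolution, exactly as the quoted lemma implicitly does.
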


\section{Proofs}
\label{sec: proofs}\subsection{Proof of Theorem \ref{thm: moments}} \label{sec: proof moments}
\begin{proof}
% [Proof of Theorem \ref{thm: moments}]
 We start with the generating function of the process, similarly to \eqref{eq: Laplace DPP}, the generating function of the multitype process,
    \[
    G_{\boldsymbol{Z}}(t, \boldsymbol{s}):=
    \sum_{\boldsymbol{n}\in \N_0^d}
\P\left(\boldsymbol{Z}(t)=\boldsymbol{n}\right)\prod_{i=1}^{d}
s_i^{n_{i}},
    \]
    is equal to
\begin{equation}
 \label{eq: generating DPP}
    1 + \sum_{n\geq 1}\frac{(-1)^{n}}{n!}\int_{(0,t]^n}
		D(\boldsymbol{x})
		\prod_{i=1}^n\lbrb{1 - G_{\boldsymbol{Z}_{\times}}(t-x_i,\boldsymbol{s})} \Lambda(\D\boldsymbol{x}).
\end{equation}
    To obtain the moments, we differentiate with respect to $s_i$ and evaluate at $\boldsymbol{s} = \boldsymbol{1}$. Note that we can interchange $\partial/\partial s_i$ with the integral, because  
     $G_{Z_{\times}}(t,\boldsymbol{s})\in[0,1]$ for each $t$ and $||\boldsymbol{s}||\leq 1$, and the assumption that the expectation of the progeny is finite gives that $\Ebb{Z_{\times,i}(t)}$ is finite for each $t$, see for example \cite[(5) on p.202]{Athreya-Ney-72}. Therefore,
    \[
    \labsrabs{\frac{\partial}{\partial s_i}\prod_{j=1}^n\lbrb{1 - G_{\boldsymbol{Z}_{\times}}(t-x_j,\boldsymbol{s})} }\leq \sum_{i=1}^n \Ebb{Z_{\times, i}(t-x_j)}\leq C_t
    \]
    for some universal $C_t \in \Rb$ as $t$ is fixed. Moreover, as $K$ and $\Lambda$ are locally bounded, we can apply a variation of the dominated convergence theorem, e.g. \cite[Theorem 6.28]{Klenke-2020}. To further exchange the summation and $\partial/\partial s_i$, by
	Hadamard's inequality, see e.g.
	\cite[(15.A.1)]{Baccelli-Blaszczyszyn-Karray-24}, 
	\begin{equation}
		\label{eq:Hadamard}
		D(\boldsymbol{x})
		=
		\det(K(x_i,x_j))_{1\leq i,j \leq n}
		\leq \prod_{i=1}^n K(x_i,x_i),
	\end{equation}
    so
    \[
  \labsrabs{  \int_{(0,t]^n}
		D(\boldsymbol{x})
		\frac{\partial}{\partial s_i}\prod_{i=1}^n\lbrb{1 - G_{\boldsymbol{Z}_{\times}}(t-x_i,\boldsymbol{s})} \Lambda(\D\boldsymbol{x})}
        \leq \lbrb{C_t\int_{(0,t]} K(x,x)
        \Lambda(\D x)}^n =: D_t^n,
    \]
  with $D_t \in \R_+$, and again by dominated convergence, we can interchange $\partial/\partial s_i$ and $\sum_{n\geq1}$ (considering the latter as an integral w.r.t. to a Dirac-type measure). 

  Observe that
    $1-G_{\boldsymbol{Z}_
    \times}(t-x_i, \boldsymbol{1})= 0$, so
    \begin{align*}\
    \Ebb{Z_i(t)} =
    \left.\frac{\partial}{\partial s_i}
     G_{\boldsymbol{Z}}(t, \boldsymbol{s})\right|_{\boldsymbol{s} = \boldsymbol{1}}
     &=- \int_{(0,t]} K(x,x)
     \left.\frac{\partial}{\partial s_i}
     \lbrb{1-G_{\boldsymbol{Z}_\times}(t-x, \boldsymbol{s})}\right|_{\boldsymbol{s}
= \boldsymbol{1}} \Lambda(\D x) 
     \\
     &=
     \int_{(0,t]}
		K(x,x)\Ebb{Z_{\times,i}(t-x)}\Lambda(\D x),
    \end{align*}
    as the derivatives of the terms in \eqref{eq: generating DPP} for $n>1$
    would have a
    multiple $1-G_{\boldsymbol{Z}_
    \times}(t-x_i, \boldsymbol{1})$ for some $i$,
    which is zero when evaluated at $\boldsymbol{s} = \boldsymbol{1}$.
    
    Similarly for the second moments, we can exchange the derivative and sum, because we assume that
    the respective second moments exist, see \eqref{eq:strong}. We are thus interested in
\begin{equation}
    \label{eq:secondm}
    1 + \sum_{n\geq 1}\frac{(-1)^{n}}{n!}\int_{(0,t]^n}
		D(\boldsymbol{x})\lbrb{\left.
		\frac{\partial^2}{\partial s_i\partial s_j}\prod_{i=1}^n\lbrb{1 - G_{\boldsymbol{Z}_{\times}}(t-x_i,\boldsymbol{s})} \right|_{\boldsymbol{s}=\boldsymbol{1}}}\Lambda(\D\boldsymbol{x}).
\end{equation}
but again
    since 
    $1-G_{\boldsymbol{Z}_
    \times}(t-x_i, \boldsymbol{1})= 0$,
    the non-zero terms at
    $\boldsymbol{s}=1$ are obtained for $n=1$ and $n=2$. For $n=1$, we have
    \[
   \left. \frac{\partial^2}{\partial s_i
     \partial s_j} 
     \lbrb{1-G_{\boldsymbol{Z}_\times}(t-x, \boldsymbol{s})}\right|_{\boldsymbol{s}
     = \boldsymbol{1}} = -\Ebb{Z_{\times,i}(t-x)Z_{\times,j}(t-x)},
    \]
    and for $n=2$,
    \begin{align*}
{\frac{\partial}{\partial s_i
}} 
\lbrb{1 - G_{\boldsymbol{Z}_{\times}}(t-x_1,\boldsymbol{s})}
&{\frac{\partial}{\partial s_j
}} 
\lbrb{1 - G_{\boldsymbol{Z}_{\times}}(t-x_2,\boldsymbol{s})}
\\
&+
{\frac{\partial}{\partial s_j
}} 
\lbrb{1 - G_{\boldsymbol{Z}_{\times}}(t-x_1,\boldsymbol{s})}
{\frac{\partial}{\partial s_i
}} 
\lbrb{1 - G_{\boldsymbol{Z}_{\times}}(t-x_2,\boldsymbol{s})},
    \end{align*}
 which evaluated at $\boldsymbol{s}=
 \boldsymbol{1}$ is exactly 
 \[
 F^\ast(i,j,x_1,x_2):=\Ebb{Z_{\times, i}(t-x_1)}\Ebb{ Z_{\times, j}(t-x_2)}
 +
 \Ebb{Z_{\times, j}(t-x_1)}\Ebb{ Z_{\times, i}(t-x_2)}.
 \]
Writing
$F(i,x_k):=\Ebb{Z_{\times, i}(t-x_k)}$ and using that \[\Ebb{{Z}_i(t)} = \int_{(0,t]}
		K(x,x)\Ebb{Z_{\times,i}(t-x)}\Lambda(\D x),\] we have
\begin{align*}
\int_{(0,t]^2}& K(x_1,x_1)K(x_2,x_2)
F^\ast(i,j,x_1,x_2)
\Lambda(\D x_1)\Lambda(\D x_2)
\\
&=
\int_{(0,t]} K(x_1,x_1)F(i,x_1)\Lambda(\D x_1) 
\int_{(0,t]} K(x_2,x_2)F(j,x_2)\Lambda(\D x_2) \\
&\qquad\qquad\qquad
+
\int_{(0,t]} K(x_1,x_1)F(j,x_1)\Lambda(\D x_1) 
\int_{(0,t]} K(x_2,x_2)F(i,x_2)\Lambda(\D x_2) \\
&= 2 \Ebb{Z_i(t)}\Ebb{Z_j(t)}.
\end{align*}
Similarly,
because  by definition $K(x,y) = K(y,x)$, one can also check that
\begin{align*}
    \int_{(0,t]^2} K^2(x_1,x_2)
&F^\ast(i,j,x_1,x_2)\Lambda(\D x_1)\Lambda(\D x_2)\\
  &=2\int_{(0,t]^2} K^2(x_1,x_2)
\Ebb{Z_{\times, i}(t-x_1)}\Ebb{Z_{\times, i}(t-x_2)}\Lambda(\D x_1)\Lambda(\D x_2).
\end{align*} Therefore, substituting in \eqref{eq:secondm}, we get
    \begin{align*}
    &\Ebb{Z_i(t) Z_j(t)} = 
    \left. \frac{\partial^2}{\partial s_i
     \partial s_j}  G_{\boldsymbol{Z}}(t, \boldsymbol{s})\right|_{\boldsymbol{s} = \boldsymbol{1}}
     = -\int_{(0,t]} K(x,x)
     \left.\frac{\partial^2}{\partial s_i
     \partial s_j} 
     \lbrb{1-G_{\boldsymbol{Z}_\times}(t-x, \boldsymbol{s})}\right|_{\boldsymbol{s}
= \boldsymbol{1}} \Lambda(\D x) 
\\&\qquad\quad\,\,\,+\frac{1}{2!}
\int_{(0,t]^2}
		\det(K(x_i,x_j))_{1\leq i,j \leq 2}
	 \left.\frac{\partial^2}{\partial s_i
     \partial s_j} 
      \lbrb{	\prod_{i=1}^2\lbrb{1 - G_{\boldsymbol{Z}_{\times}}(t-x_i,\boldsymbol{s})} }\right|_{\boldsymbol{s}
= \boldsymbol{1}}\Lambda(\D x_1)  \Lambda(
		\D x_2)
\\
&= 	\int_{(0,t]} K(x,x)
\Ebb{Z_{\times, i}(t-x) Z_{\times, j}(t-x)} \Lambda(\D x) \\
& \qquad\quad\,\,\,+\frac{1}{2}\int_{(0,t]^2} \Big(K(x_1,x_1)K(x_2,x_2) -K^2(x_1, x_2)\Big)
F^\ast(i,j,x_1,x_2)
\Lambda(\D x_1) \Lambda(\D x_2)\\
&=\int_{(0,t]} K(x,x)
\Ebb{Z_{\times, i}(t-x) Z_{\times, j}(t-x)} \Lambda(\D x) \\
&\qquad\quad\,\,\,+ \Ebb{Z_i(t)}\Ebb{Z_j(t)}
- \int_{(0,t]^2} K^2(x_1,x_2)
\Ebb{Z_{\times, i}(t-x_1)}\Ebb{Z_{\times, i}(t-x_2)}\Lambda(\D x_1)\Lambda(\D x_2).
    \end{align*}
    Using the covariance formula
    gives exactly the second item
    of Theorem \ref{thm: moments}, which concludes the proof.
\end{proof}
\subsection{Proof of Theorem
	\ref{thm: DPP}}
\label{sec: proof DPP1}
\begin{proof}
% [Proof of Theorem \ref{thm: DPP}]
We will work with the Laplace transform of $\boldsymbol{Z}$,
	\[
	\mathcal{L}_{\boldsymbol{Z}}(t,\boldsymbol{s}) :=
	\Ebb{\exp\lbrb{-\langle \boldsymbol{Z}(t), \boldsymbol{s}\rangle}},
	\]
	and use a similar technique to
 \cite[Theorem 3]{Rabehasaina-21}.
	To facilitate the formalism, define $\boldsymbol{Z}(t)=0$ for $t <0$. Using \eqref{eq: limit ndim}, we have that for positive $x$,
	\[
	\frac{\boldsymbol{Z}_{\times}(t-x)}{e^{\rho t}}
	\xrightarrow[t \to \infty]{a.s.}
	\boldsymbol{v}\cdot e^{-\rho x} W_\times ,
	\]
	which gives the pointwise convergence
	of the Laplace transforms,
	for each $x>0$,
	\begin{equation}
		\label{eq: pointwise}
		\mathcal{L}_{\boldsymbol{Z}_{\times}}\lbrb{(t-x),\boldsymbol{s}e^{-\rho t}} :=
		\Ebb{\exp\lbrb{-\langle\boldsymbol{Z}_\times(t-x),\boldsymbol{s}e^{-\rho t}\rangle}}\xrightarrow[t\to \infty]{}\mathcal{L}_{\boldsymbol{v}{W}_{\times}}\lbrb{\boldsymbol{s}e^{-\rho x}}.
	\end{equation}
	By \eqref{eq: Laplace mu} and
	Theorem \ref{thm: multi},
	we get that
	\begin{equation}
		\label{eq: LZ}
		\mathcal{L}_{\boldsymbol{Z}}\lbrb{t,e^{-\rho t}\boldsymbol{s}}=
		1 + 
		\sum_{n \geq 1}
		\frac{(-1)^n}{n!}\int_{(0,t]^n}
		D(\boldsymbol{x})       \prod_{i=1}^n\lbrb{1-\mathcal{L}_{\boldsymbol{Z}_{\times}}(t-x_i,\boldsymbol{s}e^{-\rho t})} \Lambda(\D \boldsymbol{
			x
		}).
	\end{equation}
	From \eqref{eq: pointwise}, we have that
	for each $\boldsymbol{x}\in\R^n$,
	\begin{equation}
		\label{eq: pointwise integral}
		\ind{\boldsymbol{x} \in (0,t]^n}D(\boldsymbol{x})\prod_{i=1}^n\lbrb{1-\mathcal{L}_{\boldsymbol{Z}_{\times}}(t-x_i,\boldsymbol{s}e^{-\rho t})} 
		\xrightarrow[t \to \infty]{}
		D(\boldsymbol{x})
		\prod_{i=1}^n\lbrb{1-\mathcal{L}_{\boldsymbol{v}W_{\times}}(\boldsymbol{s}e^{-\rho x_i})}.
	\end{equation}
	To use the last result under the integral sign in \eqref{eq: LZ},
	we need to provide a
	bound with an integrable function. First, by
	Hadamard's inequality \eqref{eq:Hadamard}, 
$D(\boldsymbol{x})
		% =
		% \det(K(x_i,x_j))_{1\leq i,j \leq n}
		\leq \prod_{i=1}^n K(x_i,x_i).
$
	Second, note that as we are working with Laplace transforms of positive random variables, we can assume that the vector
	$\boldsymbol{s}$ has positive entries. Let $C_{\boldsymbol{s}}^\ast := \max_j s_j/\min_j v_j$ so that for
	all $j$, $0\leq s_j\leq C_{\boldsymbol{s}}^\ast v_j$. We next bound as in \cite[Section 3.1]{Rabehasaina-21},
	\begin{equation}
		\label{eq: boundeq}
		\begin{split}
			% \mathcal{L}_{\boldsymbol{Z}_{\times,1}}(t-x_i,\boldsymbol{s}e^{-\rho t})
			% =
			&e^{-\rho t}\Ebb{\langle \boldsymbol{Z}_{\times}(t-x), \boldsymbol{s} \rangle}
			=e^{-\rho x}\Ebb{\langle  e^{-\rho(t-x)}\boldsymbol{Z}_{\times}(t-x),\boldsymbol{s}\rangle}  \\
			% &\leq
			% e^{-\rho x}\Ebb{\langle |\boldsymbol{s}|, e^{-\rho(t-x)}\boldsymbol{Z}_{\times}(t-x)\rangle}\\
            &\qquad\qquad	\leq C_{\boldsymbol{s}}^\ast
			e^{-\rho x}\Ebb{\langle e^{-\rho(t-x)}\boldsymbol{Z}_{\times}(t-x),\boldsymbol{v}\rangle} \\
			 &\qquad\qquad=C_{\boldsymbol{s}}^\ast
			e^{-\rho x}\Ebb{\langle e^{-\rho\cdot0}\boldsymbol{Z}_{\times}(0),
				\boldsymbol{v}\rangle}
				=C_{\boldsymbol{s}}^\ast
			e^{-\rho x}\Ebb{\langle \boldsymbol{I}_{0}, \boldsymbol{v}\rangle}=: C_{\boldsymbol{s}}e^{-\rho x},
		\end{split}
	\end{equation}
	where we used the fact that $(\langle e^{-\rho t}\boldsymbol{Z}_{\times}(t),
	\boldsymbol{v}\rangle)_{t\geq0}$ is a martingale,
	see \cite[p.209]{Athreya-Ney-72}.
	 Therefore,
	\begin{align*}
		\labsrabs{1- \mathcal{L}_{\boldsymbol{Z}_{\times}}(t-x,\boldsymbol{s}e^{-\rho t})}
		&=
		\Ebb{1- \exp\lbrb{-e^{-\rho t}\langle\boldsymbol{Z}_{\times}(t-x),
				\boldsymbol{s}\rangle}}\leq 
		\Ebb{e^{-\rho t}\langle \boldsymbol{Z}_{\times}(t-x),\boldsymbol{s}\rangle}\\
		&\leq C_{\boldsymbol{s}}e^{-\rho x},
	\end{align*}
	where in the first inequality we used that $1-e^{-x}\leq x$ for positive $x$.
	Thus we get
	\begin{equation}
		\label{eq: abs bound}
		\labsrabs{
			\ind{\boldsymbol{x} \in (0,t]^n}D(\boldsymbol{x})\prod_{i=1}^n\lbrb{1-\mathcal{L}_{\boldsymbol{Z}_{\times}}(t-x_i,\boldsymbol{s}e^{-\rho t})} }
		\leq 
		C_{\boldsymbol{s}}^n\prod_{i=1}^n K(x_i,x_i)e^{-\rho x_i}.
	\end{equation}
	The last is indeed
	a $\Lambda(\D \boldsymbol{x})$-integrable function because 
     $\int_{(0,\infty)} e^{-\rho x} K(x,x)\Lambda(\D x)$ is finite by assumption, so 
	\begin{equation}
		\begin{split}
			\label{eq: bound K}
			&C_{\boldsymbol{s}}^n\int_{(0,\infty)^n} \prod_{i=1}^n\det(K(x_i,x_i)) e^{-\rho x_i} \Lambda(\D \boldsymbol{
				x
			}) 
			=
			\lbrb{C_{\boldsymbol{s}}\int_{(0,\infty)} e^{-\rho x} K(x,x)\Lambda (\D x) }^n<\infty.
		\end{split}
	\end{equation}
	Therefore, we
	can apply the dominated convergence theorem in
	\eqref{eq: pointwise integral}, which gives that, for each $n$,
	\begin{align*}
		\int_{(0,t]^n}
		D(\boldsymbol{x})     &  \prod_{i=1}^n\lbrb{1-\mathcal{L}_{\boldsymbol{Z}_{\times}}(t-x_i,\boldsymbol{s}e^{-\rho t})} \Lambda(\D \boldsymbol{
			x
		}) \\
		&
		\hspace{2cm}\xrightarrow[t \to \infty]{}
		\int_{(0,\infty)^n} D(\boldsymbol{x})       \prod_{i=1}^n\lbrb{1-\mathcal{L}_{\boldsymbol{v}W_{\times}}(\boldsymbol{s}e^{-\rho x_i})}
		\Lambda(\D \boldsymbol{
			x
		}) .    
	\end{align*}
	Substituting in \eqref{eq: LZ}, again by dominated convergence and the bound
	\eqref{eq: bound K}, we obtain that
	\[
	\mathcal{L}_{\boldsymbol{Z}}\lbrb{t,e^{-\rho t}\boldsymbol{s}}\xrightarrow[t \to \infty]{}
	1 + 
	\sum_{n \geq 1}
	\frac{(-1)^n}{n!}\int_{(0,\infty)^n}
	D(\boldsymbol{x})      \prod_{i=1}^n\lbrb{1-\mathcal{L}_{\boldsymbol{v}W_{\times}}(\boldsymbol{s}e^{-\rho x_i})} \Lambda(\D \boldsymbol{
		x
	}) .
	\]
	We note that the sum in the last limit is
	absolutely convergent by \eqref{eq: bound K}.
	Letting $\Phi$
	be the underlying
	DPP, we can rewrite the last as
	\[
	\mathcal{L}_{\boldsymbol{Z}}\lbrb{t,e^{-\rho t}\boldsymbol{s}}\xrightarrow[t \to \infty]{}
	\mathcal{L}_{\Phi}
	\lbrb{-\ln\lbrb{\mathcal{L}_{\boldsymbol{v}W_\times
			}\lbrb{\boldsymbol{s}e^{-\rho x}}}},
	\]
	which is exactly the Laplace transform of
	\[
	\int_{(0,\infty)} e^{-\rho x}\Phi^\ast(\D x),
	\]
	where $\Phi^\ast(\D x)$ is the random measure
	$\sum_k \beta_k \delta_{T_i}$, where
	$\beta_k$ are iid 
	$\R^d$ random variables
	with distribution
	$\boldsymbol{v}W_\times$
	and $T_i$ are the atoms
	of the determinantal point process $\Phi$,
	see \cite[Example 2.2.30]{Baccelli-Blaszczyszyn-Karray-24}.
	This concludes the proof of Theorem \ref{thm: DPP}.
\end{proof}

\subsection{Proof of Theorem \ref{thm: DPP2}}\label{sec: proof DPP2}

\begin{proof}[Proof of Lemma \ref{lemma 2}]	
We will use the following result. 
\begin{lemma}[Lemmas 2 and 3 from \cite{Bojkova-Hyrien-Yanev-22}]\label{lem: lemma 23}
	Let $\alpha_\infty$ and $\beta_\infty$ be positive real constants. 
    Then we have the following.
	\begin{enumerate}
		\item  If \( \alpha(t) \simi \alpha_\infty e^{\delta t} \) with \( \delta \in \mathbb{R} \) and 
		$
		\int_{0}^{\infty} e^{-\delta x}\beta(x)\, \D x $ is finite,
		then
		 \[ \int_0^t
         \beta(t-x)\alpha(x) \D x \simi \alpha(t) \int_{0}^{\infty} e^{-\delta x}\beta(x) \D x. \]
		
		\item  If \( \alpha(t) \simi \alpha_\infty\)  and $ \beta(t)  \simi \beta_\infty$,
		then 
		\[
\int_0^t
\beta(t-x)
\alpha(x) \D x\simi 
\alpha_\infty \beta_\infty t.
		\]
	\end{enumerate}
\end{lemma}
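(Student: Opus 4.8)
The plan is to treat both items as dominated-convergence statements, in each case reducing the convolution to an integral whose integrand converges pointwise to the claimed limit. For item 1, I would first substitute $y=t-x$ to rewrite the convolution as $\int_0^t\beta(y)\alpha(t-y)\,\D y$. Writing $\alpha(s)=e^{\delta s}\ell(s)$ with $\ell(s):=\alpha(s)e^{-\delta s}$, the hypothesis $\alpha(t)\simi\alpha_\infty e^{\delta t}$ says exactly that $\ell(s)\to\alpha_\infty$ as $s\to\infty$; under the mild regularity that $\alpha$ (hence $\ell$) is locally bounded---automatic in the applications here, where $\alpha$ is a finite continuous moment function---$\ell$ is bounded on $\Rb_+$ by some $M$. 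Factoring out $e^{\delta t}$ gives
\[
\int_0^t\beta(t-x)\alpha(x)\,\D x = e^{\delta t}\int_0^t \beta(y)e^{-\delta y}\ell(t-y)\,\D y,
\]
so that dividing by $\alpha(t)=e^{\delta t}\ell(t)$ yields $\ell(t)^{-1}\int_0^t\beta(y)e^{-\delta y}\ell(t-y)\,\D y$. For each fixed $y$ the integrand converges to $\alpha_\infty\,\beta(y)e^{-\delta y}$, and it is dominated by $M\beta(y)e^{-\delta y}$, which is integrable precisely because $\int_0^\infty e^{-\delta x}\beta(x)\,\D x<\infty$ by assumption. Dominated convergence together with $\ell(t)\to\alpha_\infty>0$ then delivers the claimed equivalence $\int_0^t\beta(t-x)\alpha(x)\,\D x\simi\alpha(t)\int_0^\infty e^{-\delta x}\beta(x)\,\D x$.

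For item 2 I would instead rescale by $x=tu$ with $u\in[0,1]$, which turns the convolution into
\[
\frac{1}{t}\int_0^t\beta(t-x)\alpha(x)\,\D x = \int_0^1 \beta\lbrb{t(1-u)}\alpha(tu)\,\D u.
\]
For every $u\in(0,1)$ both arguments tend to infinity as $t\to\infty$, so the integrand converges to $\alpha_\infty\beta_\infty$ for a.e.\ $u$; since $\alpha$ and $\beta$ are convergent and locally bounded they are bounded on $\Rb_+$, whence the integrand is dominated by a constant on $[0,1]$. Dominated convergence then gives $\int_0^1\alpha_\infty\beta_\infty\,\D u=\alpha_\infty\beta_\infty$, that is, $\int_0^t\beta(t-x)\alpha(x)\,\D x\simi\alpha_\infty\beta_\infty\,t$.

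The main obstacle in both arguments is exhibiting the integrable dominating function, which is exactly where the finiteness hypothesis of item 1 and the boundedness of $\alpha,\beta$ in item 2 enter. Both rest on the implicit regularity that the functions are locally bounded, so that convergence at infinity upgrades to global boundedness of $\ell$ (respectively of $\alpha$ and $\beta$); I would record this assumption explicitly and note that it is satisfied throughout, as $\alpha$ and $\beta$ are finite continuous moment functions of the branching process. Everything else is routine: the substitutions are exact, the pointwise limits follow immediately from the hypotheses, and the positivity of $\alpha_\infty$ (and $\beta_\infty$) guarantees that the final division by $\ell(t)$ in item 1 is legitimate.
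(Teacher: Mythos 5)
Your proof is correct, but note first that the paper itself contains no proof of this statement to compare against: the lemma is imported verbatim (as ``Lemmas 2 and 3'') from the cited reference \cite{Bojkova-Hyrien-Yanev-22}, so you have supplied an argument where the paper supplies only a citation. Your route --- substitute $y=t-x$, factor $\alpha(s)=e^{\delta s}\ell(s)$ with $\ell(s)\to\alpha_\infty$, dominate by $M\beta(y)e^{-\delta y}$, and apply dominated convergence; respectively rescale $x=tu$ and dominate by a constant on $[0,1]$ --- is the standard proof of such convolution asymptotics and is essentially the argument one finds in the source (the classical alternative, splitting the integration range at a fixed $A$ or at $t/2$ and estimating the two pieces separately, proves the same thing without the substitution; it buys nothing here). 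Two small points deserve the explicit mention you partly give them. First, you correctly flag that convergence at infinity upgrades to global boundedness of $\ell$ (resp.\ of $\alpha$ and $\beta$) only under local boundedness; this is indeed satisfied in every application in the paper, where $\alpha$ and $\beta$ are finite continuous moment functions such as $\lambda(t)$, $\Ebb{Z_{\times,i}(t)}$ and $\Ebb{(Z_{\times,i}(t))^2}$. Second, in item 1 your domination $|\beta(y)e^{-\delta y}\ell(t-y)|\le M\beta(y)e^{-\delta y}$ tacitly uses $\beta\ge 0$ (or, more precisely, that $\int_0^\infty e^{-\delta x}|\beta(x)|\,\D x<\infty$, which is what the hypothesis ``the integral is finite'' should be read as); since all the $\beta$'s in the paper are nonnegative, this is harmless, but it should be recorded alongside the local-boundedness caveat. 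With those two provisos made explicit, the proof is complete and legitimate, including the final division by $\ell(t)\to\alpha_\infty>0$, which is valid for all large $t$ exactly as the paper's asymptotic notation $\simi$ requires.
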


	Let us start with the observation that the stationarity assumption
	\eqref{eq: stationary}, applied for $n=1$, implies that $K(x,x)$ 
	is constant over $x$. Denote its value by $K_\ast$.
By the classical result, we have
	see e.g. \cite[p. 203]{Athreya-Ney-72} or \cite[p.151]{Sevastyanov-71}, 
	    \begin{equation}
\label{eq: expectaion a_i}
\Ebb{Z_{\times,i}(t)} \simi
	e^{\rho t} a_i:= e^{\rho t} \langle
	\boldsymbol{u}, \Ebb{\boldsymbol{I}}
	\rangle v_i
	,
    \end{equation}
Next, by Theorem
	\ref{thm: moments},
	$\Ebb{Z_i(t)} = K_\ast \int_0^t
	\Ebb{Z_{\times,i}(t-x)} \lambda(x)\D x$,
    so 
\begin{itemize}
    \item if $\delta>\rho$,
    by \eqref{eq: expectaion a_i} the integral
    $\int_0^\infty
    e^{-\delta x} \Ebb{Z_{\times,i}(x)}\D x$ is finite, and from the first part 	of  Lemma \ref{lem: lemma 23} with
	$\alpha(t) = \lambda(t)$ and $\beta(t) = \Ebb{Z_{\times,i}(t)}$, we obtain
\[
	\Ebb{Z_i(t)} \simi e^{\delta t} A_i:=
	e^{\delta t} K_\ast \lambda_\infty \int_0^\infty e^{-\delta x}\Ebb{Z_{\times,i}(x)}\D x.
\]
\item if $\delta = \rho$, we apply the second part of
Lemma \ref{lem: lemma 23} with
$\beta(t) = e^{-\delta t}\Ebb{Z_{\times,i}(t)}\simi a_i$,
and $\alpha(t) =  e^{-\delta t}\lambda(t) \simi\lambda_\infty$,
so
\[
\Ebb{Z_i(t)} = K_\ast e^{\delta t}\int_{0}^t 
\beta(t-x)\alpha(x)\D x \simi K_\ast e^{\delta t} t a_i \lambda_\infty =: A_i' t e^{\delta t}.
\]
\end{itemize}

We continue with a similar strategy for the variance:
the asymptotics of the second moment of $Z_{\times,i}(t)$
	are available in
	\cite[p.152]{Sevastyanov-71} or
	\cite[p.204]{Athreya-Ney-72},
	giving us,
	for some positive constants $B_{j,i}$,
	\[
	\Ebb{\lbrb{Z_{\times, i}(t)}^2}\simi
	\begin{cases}
		e^{\rho t} B_{1,i} & \text{if
			$\rho<0$} \\
		t B_{2,i}  & \text{if
			$\rho = 0$} \\
		e^{2\rho t}B_{3, i}  & \text{if $\rho > 0$}.
	\end{cases}
	\]
Further,  by the second part of Theorem \ref{thm: moments},    
	\[\Var(Z_i(t))
	\leq K_\ast \int_0^t \Ebb{\lbrb{Z_{\times, i}(t-x)}^2} \lambda(x) \D x.
	\]
	We consider the relevant cases.
    \begin{itemize}
        \item Fix $\delta>\rho$. We apply Lemma \ref{lem: lemma 23} with
        $\alpha(t) = \Ebb{(Z_{\times,i}(t))^2}$
        and $\beta(t) = \lambda(t) \simi e^{\delta t}\lambda_\infty$. Note that if $\rho<0$, $\alpha(t) \simi e^{\rho t}B_{1,i}$, and
        if $\rho = 0$, $\alpha(t) \simi tB_{2,i}$, so in both cases $\int_0^\infty e^{-\delta x}\alpha(x) \D x$ is finite, so we obtain
      \[   K_\ast \int_0^t \Ebb{\lbrb{Z_{\times, i}(t-x)}^2} \lambda(x) \D x \simi K_ \ast e^{\delta t} \lambda_{\infty}\int_0^\infty
      e^{-\delta x}\beta(x)\D x = \bo{e^{\delta t}}.
\]
        \item Fix $\rho > 0$ and $\delta = 2\rho$. We
        apply the second item of  Lemma \ref{lem: lemma 23} with
        $\alpha(t)
        = e^{-2\rho t}\Ebb{(Z_{\times,i}(t))^2}\simi B_{3,i}$
        and $\beta(t) = e^{-\delta t}\lambda(t)\simi 
        \lambda_\infty$, so
      \[   K_\ast \int_0^t \Ebb{\lbrb{Z_{\times, i}(t-x)}^2} \lambda(x) \D x = K_ \ast e^{2 \rho t} \int_0^t
      \alpha(t-x) \beta (t) \D t
       = \bo{te^{2 \rho t}}
        =\bo{te^{\delta t}}.
\]
\item With similar arguments, if $\delta>2\rho>0$, we have $\Ebb{(Z_{\times,i}(t))^2}\simi e^{2\rho t}B_{3,i}$, so
we apply Lemma \ref{lem: lemma 23} with
$\alpha(t) :=
\Ebb{(Z_{\times,i}(t))^2}$ and
$\beta(t) = \lambda(t)$. If $
\delta < 2\rho$,
we should reverse the roles of $\alpha$ and $\beta$.
        \item If $\delta=\rho>0$, we use Lemma \ref{lemma 2} with
        $\alpha(t) = \lambda(t)\simi \lambda_\infty e^{
        \delta x}$ and $\beta(t) = \Ebb{(Z_{\times,i}(t))^2 } \simi e^{2\rho t}B_{3,i}.$
    \end{itemize}
    \end{proof}
\begin{proof}[Proof of Theorem \ref{thm: DPP2}]
	
For items \ref{it:3} and \ref{it:4} we follow an approach similar
	to the one chosen by Rabehasaina and Woo in \cite{Rabehasaina-21}, and for \ref{it:1}  and
\ref{it:2},
	techniques from the works \cite{ Hyrien-Mitov-Yanev-13-super, Hyrien-Mitov-Yanev-15-sub,Bojkova-Hyrien-Yanev-22,Bojkova-Yanev-19-crit}, which core argument lies in Lemma \ref{lemma 2}.

Indeed substituting its estimates, we see that if
$\delta > \max\{\rho,0\}$ or $\delta=\rho>0$,
\begin{equation}
		\label{eq: L2 limit}
		\Ebb{\lbrb{\frac{Z_i(t)}{\Ebb{Z_i(t)}} - 1}^2}  = \frac{\Var(Z_i(t))}{\Ebb{Z_i(t)}^2}\xrightarrow[t\to\infty]{}0,
	\end{equation}
   which gives exactly items \ref{it:1} and \ref{it:2} of the theorem.

	\textit{(\ref{it:4})} In this case we assume that
	the process is subcritical ($\rho<0$)
	and that $\lambda(x) 
	\simi \lambda_\infty$.
	As we assume that the immigration is spanned by a DPP, by Theorem \ref{thm: multi}
	\begin{equation}
		\label{eq: LZ thm2}
		\begin{split}
			\mathcal{L}_{\boldsymbol{Z}}\lbrb{t,\boldsymbol{s}}&=
			1 + 
			\sum_{n \geq 1}
			\frac{(-1)^n}{n!}\int_{(0,t]^n}
			D(\boldsymbol{x})       \prod_{i=1}^n\lbrb{1-\mathcal{L}_{\boldsymbol{Z}_{\times}}(t-x_i,\boldsymbol{s})} \Lambda(\D \boldsymbol{
				x
			}).
		\end{split}
	\end{equation}
	To exploit the convergence of $
	\lambda(t)$ as $t\to \infty$, we change variables $x_i \to t-x_i$ in the last
	integral. Note that \begin{equation}\label{eq:D bound stationary}
		\det(K(t-x_i,t-x_j))_{1\leq i,j \leq n}=
		\det(K(x_i,x_j))_{1\leq i,j \leq n}
		\leq \prod_{i=1}^n K(x_i,x_i) = 
		K_\ast^n,
	\end{equation}
	where the first follows from our assumption
	that the DPP is
	stationary and also using  inequality
	\eqref{eq:Hadamard}. Next, for $\boldsymbol{x} = (x_1, \dots, x_n)$, denoting	
	\[
	\lambda( t- \boldsymbol{x}):=
	\lambda(t-x_1) \dots
	\lambda(t-x_n),\quad
	\text{and}
	\quad
	\D \boldsymbol{x} := 
	\D x_1 \dots \D x_n,
	\]
	we get that
	\begin{equation}
		\label{eq: LZ 2.1}
		\begin{split}
			\mathcal{L}_{\boldsymbol{Z}}\lbrb{t,\boldsymbol{s}}&=
			1 + 
			\sum_{n \geq 1}
			\frac{(-1)^n}{n!}\int_{(0,t]^n}
			D(\boldsymbol{x})       \prod_{i=1}^n\lbrb{1-\mathcal{L}_{\boldsymbol{Z}_{\times}}(x_i,\boldsymbol{s})} \lambda(t- \boldsymbol{
				x
			})
			\D \boldsymbol{
				x}.
		\end{split}
	\end{equation}
	To apply the dominated 
	convergence theorem,
	first bound as in 
	\eqref{eq: boundeq}
	and with the same
	$C_{\boldsymbol{s}}$ as there,
	\begin{equation}
		\label{eq: 1-lz}
		\labsrabs{1- \mathcal{L}_{\boldsymbol{Z}_{\times}}(x,\boldsymbol{s})}
        =
		\Ebb{1- \exp\lbrb{-\langle\boldsymbol{Z}_{\times}(x),
				\boldsymbol{s}\rangle}}
		\leq
		\Ebb{\langle \boldsymbol{Z}_{\times}(x), \boldsymbol{s} \rangle} \leq 
		e^{\rho x}C_{\boldsymbol{s}}.
	\end{equation}
	Next, pick $C$ such that
	$\sup_{t \geq C} \lambda(t)<
	\infty$, which exists
	as $\lambda$ has a finite limit at infinity. Therefore,
	for each $\boldsymbol{x}\in\R^n$,
	\begin{equation*}
		\begin{split}
			&\labsrabs{
				\ind{\boldsymbol{x} \in (0,t]^n}D(\boldsymbol{x})\prod_{i=1}^n\left(1-\mathcal{L}_{\boldsymbol{Z}_{\times}}(x_i,\boldsymbol{s})\right) \lambda(t-\boldsymbol{x})
			} 
			\leq 
			\prod_{i=1}^n
			\ind{x_i \in (0,t)}K_\ast C_{\boldsymbol{s}}e^{
				\rho x_i}
			\lambda(t-x_i)\\
			&\hspace{1cm}=
			K_\ast^n C_{\boldsymbol{s}}^n
			\prod_{i=1}^n \lbrb{\ind{x_i \in (0,t-C)}e^{
					\rho x_i}
				\lambda(t-x_i)
				+
				\ind{x_i \in (t-C,t)}e^{
					\rho x_i}
				\lambda(t-x_i) }
			\\
			&\hspace{1cm}\leq
			K_\ast^n C_{\boldsymbol{s}}^n
			\prod_{i=1}^n
			\lbrb{e^{
					\rho x_i}
				\sup_{x\geq C} \lambda(x)
				+
				\ind{x_i \in (0,C)}
				\lambda(x_i) }.
		\end{split}
	\end{equation*}
	The last is a $\D \boldsymbol{x}$-integrable function with
	\begin{equation}
		\label{eq: geom 2.1}
		\begin{split}
			\int_{(0,\infty)^n}
			K_\ast^n C_{\boldsymbol{s}}^n
			& \prod_{i=1}^n
			\lbrb{e^{
					\rho x_i}
				\sup_{x\geq C} \lambda(x)
				+
				\ind{x_i \in (0,C)}
				\lambda(x_i) } \D \boldsymbol{x}\\
			& =
			\lbrb{K_ \ast C_{\boldsymbol{s}}\int_0^\infty 
				\lbrb{e^{
						\rho x_i}
					\sup_{x\geq C} \lambda(x)
					+
					\ind{x_i \in (0,C)}
					\lambda(x) } \D x}^n < 
			\infty,
		\end{split}
	\end{equation}
	because by assumption the
	directing measure $\Lambda$ of a DPP  is always $\sigma$-finite and because in this case $\rho<0$. Therefore
	by dominated convergence
	\begin{align*}
		\int_{(0,t)^n}
		D(\boldsymbol{x})       \prod_{i=1}^n&\lbrb{1-\mathcal{L}_{\boldsymbol{Z}_{\times}}(x_i,\boldsymbol{s})} \lambda(t- \boldsymbol{
			x
		})
		\D \boldsymbol{
			x}\\
		&\xrightarrow[t \to \infty]
		{}
		\lambda_\infty^n\int_{(0,\infty)^n}
		D(\boldsymbol{x})       \prod_{i=1}^n\lbrb{1-\mathcal{L}_{\boldsymbol{Z}_{\times}}(x_i,\boldsymbol{s})}
		\D \boldsymbol{
			x},
	\end{align*} 
	and again by dominated convergence, which we can apply
	thanks to the geometric bound
	\eqref{eq: geom 2.1},
	for the sum in \eqref{eq: LZ 2.1}, we get
	\begin{equation}
		\label{eq:Laplace limit}
		\mathcal{L}_{\boldsymbol{Z}}\lbrb{t,\boldsymbol{s}}
		\xrightarrow[t \to \infty]{}
		1 + 
		\sum_{n \geq 1}
		\frac{(-1)^n\lambda^n_{\infty}
		}{n!}\int_{(0,\infty)^n}
		D(\boldsymbol{x})       \prod_{i=1}^n\lbrb{1-\mathcal{L}_{\boldsymbol{Z}_{\times}}(x_i,\boldsymbol{s})}
		\D \boldsymbol{
			x}.
	\end{equation}
	To conclude that the last is the Laplace transform of a positive random variable, it remains to 
	verify tends to $1$ as $\boldsymbol{s}\to \boldsymbol{0}$,
	see e.g. \cite[p. 431, Chapter XIII.1, Theorem 2]{Feller-71}.
	This is indeed true by the bound
	\eqref{eq: 1-lz} and the choice
	$C_{\boldsymbol{s}} = \max_j{s_j}/\min_j{v_j}$.  
	
	\textit{(\ref{it:3})} 
    For critical processes without immigration, Holte \cite{Holte-82} obtains the limit distribution under optimal assumptions and for a more general class of branching mechanisms. Note, however, that Holte’s $\boldsymbol v$ is an eigenvector of the matrix with entries $
  \Ebb{(\nu_i)_j},
$
whereas in this work we define~$\boldsymbol v$ via \eqref{eq:def matrix A} (see also Remark~\ref{rem: eigen}).
Denoting with
 lower index $H$ the corresponding quantities from the work of Holte, and taking into account the normalization there
 $\langle\boldsymbol{u}_H, \boldsymbol{v}_H \rangle = 1,
     \langle\boldsymbol{v}_H, \boldsymbol{1} \rangle = 1$, we have that for
     $D_1:={\sum _j(v_j/\mu_j) 
	}$ and
    $D_2:=\sum_j (u_j v_j/\mu_j)$,
     \begin{equation}
         \label{eq: Holte eigen}
u_{H,i} = \frac{D_1
	}{D_2}u_i,
	\quad
	v_{H,i} = \frac{1}{D_1}\frac{v_i}{\mu_i},
	\quad
	\text{and}
	\quad
	\sum_j \mu_j u_{H,j}v_{H,j}= \frac{1}{D_2}.
      \end{equation}
We show how the results of Holte \cite{Holte-82} translate to our setting.
Let $(e_i)$ be the standard basis in $\R^d$. 
 Therefore from \cite[Lemma on page p.493]{Holte-82}, and using lower indices $H$ for constants from \cite{Holte-82},
 we have that
     \begin{align*}
 H_i\lbrb{\frac{\boldsymbol{s}}{t},t} &:=
 \Ebb{1-\exp\left.\lbrb{-\langlerangle{Z_\times(t),\frac{\boldsymbol{s}}{t}}}\right|
 Z_{\times}(0)=e_i)}\\
 &=
        \frac{1}{t}\frac{||\boldsymbol{s||}_H}{1+\langlerangle{\boldsymbol{c}_H,{\boldsymbol{s}}}}\boldsymbol{u}_{H,i} + \so{\frac1t}
        =: \frac{C_i}{t}+ \so{\frac1t}.
	\end{align*}
Our process starts with a random number of particles of law $\boldsymbol{I}$, so
a first order expansion gives us
    \begin{equation}\label{eq: expansion}
    \begin{split}
1-\mathcal{L}_{\boldsymbol{Z}_{\times}}\lbrb{t,\frac{\boldsymbol{s}}{t}}
&= \Ebb{1- \exp\lbrb{-\langlerangle{\boldsymbol{Z}_\times(t),\frac{\boldsymbol{s}}{t}}}}\\
&=\sum_{\boldsymbol{x}=(x_1, \dots, x_d)}
\P(\boldsymbol{I}=\boldsymbol{x})\lbrb{1 - \prod_i {\lbrb{1-H_i\lbrb{\frac{\boldsymbol{s}}{t},t}}}^{x_i}}\\
&=\sum_{\boldsymbol{x}=(x_1, \dots, x_d)}
\P(\boldsymbol{I}=\boldsymbol{x})\lbrb{1 - \prod_i \lbrb{1-x_i\frac{C_i}{t}+\so{\frac1t}}}\\
&=
\sum_{\boldsymbol{x}=(x_1, \dots, x_d)}
\P(\boldsymbol{I}=\boldsymbol{x})\lbrb{\sum_i \lbrb{x_i\frac{C_i}{t}+\so{\frac1t}}}\\
&=\frac{1}{t}\Ebb{\langlerangle{\boldsymbol{I},{\boldsymbol{C}}}} +\so{\frac1t},
\quad
\text{with }
\boldsymbol{C}:=(C_1, \dots, C_d).
    \end{split}
    \end{equation}
    Setting our parameters
	\[
	\beta :=
	\frac{\Ebb{\langle\boldsymbol{I},\boldsymbol{u}\rangle}}{Q} 
	%  \sum_{i=1}^d
	% \Ebb{\boldsymbol{I}_j}{\boldsymbol{u}_j}
	\quad
	\text{with}
	\quad
	Q:=\frac{1}{2}\sum_{i,j,k=1}^d
	\left.\frac{\partial^2 G_{{\nu_i}}}{\partial x_j \partial x_k}\right|_{\boldsymbol{x}=\boldsymbol{1}}
	\mu_i^{-1}{v}_i{u}_j
	{u}_k,
	\]
    using \eqref{eq: Holte eigen}, we calculate
    the constants from \cite{Holte-82}:
    \[
    \beta_H = \frac{1}{D_2}, \quad
  \frac{1}{2}\langlerangle{\boldsymbol{v}_H,\boldsymbol{q[u]}}
=
\frac{D_1}{D_2^2}
Q, \quad  ||\boldsymbol{s}||_H = \frac{D_2}{D_1}\langlerangle{\boldsymbol{s},
\boldsymbol{v}}, \quad 
\text{and}
\quad
\boldsymbol{c}_H = 
Q \boldsymbol{v}.
    \]
Substituting the definition of $C_i$ in
    \eqref{eq: expansion},
    we obtain
        	\[
	t\lbrb{1-\mathcal{L}_{\boldsymbol{Z}_{\times}}\lbrb{t,\frac{\boldsymbol{s}}{t}}}
	\xrightarrow[t\to\infty]{}
	%     \sum_{i=1}^d
	% \Ebb{\boldsymbol{I}_i}\boldsymbol{v}_i
	{\beta}
	\frac{\langle\boldsymbol{s},
		\boldsymbol{v}
		\rangle}{
		1/Q+\boldsymbol{\langle s,
			\boldsymbol{v}
			\rangle}  
	}.
	\]
    We note that $Q\neq0$: as earlier, see  \cite[p.203]{Athreya-Ney-72},
    $u_i$ and $v_i$ are strictly positive, and because the derivatives represent factorial moments of $\Nb$-valued random variables, if $Q=0$ then $\Ebb{\nu_i(\nu_i-1)}=0$ for all $i$,
    so $\nu_i$ can take values only $0$ and 1. However by the criticality assumption, $\Ebb{Z_\times(t)}=\Ebb{Z_\times(0)}$, so we must have $\P(\nu_i =1)=1$, so $\Var(||\boldsymbol{\nu||})=0$, which is not possible by our non-degeneracy assumption.
    
We will use the last limit in a form which is obtained by substituting $t$ with $t(1-x)$, and $\boldsymbol{s}$
	with $\boldsymbol{s}(1-x)$ for $x \in (0,1)$, which gives
	\[
	t(1-x)\lbrb{1-\mathcal{L}_{\boldsymbol{Z}_{\times}}\lbrb{t(1-x),\frac{\boldsymbol{s}}{t}}}
	\xrightarrow[t\to\infty]{}
	%     \sum_{i=1}^d
	% \Ebb{\boldsymbol{I}_i}\boldsymbol{v}_i \overline{\beta}
	\beta\frac{(1-x)\langle\boldsymbol{s,
			\boldsymbol{v}
		}\rangle}{
		1/Q+(1-x)\boldsymbol{\langle s,
			\boldsymbol{v}
			\rangle}   
	}.
	\]
	Dividing by $(1-x)$, then substituting in \eqref{eq: LZ thm2}
	and changing variables $x_i \to tx_i$, we get
	\[
	\mathcal{L}_{\boldsymbol{Z}}\lbrb{t,\frac{\boldsymbol{s}}{t}}=
	1 + 
	\sum_{n \geq 1}
	\frac{(-1)^n}{n!}\int_{(0,1)^n}
	D(t\boldsymbol{x})       \prod_{i=1}^nt\lbrb{1-\mathcal{L}_{\boldsymbol{Z}_{\times}}\lbrb{t(1-x_i),\frac{\boldsymbol{s}}{t}}} \lambda\lbrb{t\boldsymbol{x}}\D \boldsymbol{
		x
	}
	\]
	Next,   \eqref{eq:D bound stationary} and \eqref{eq: 1-lz} give constant bounds of $D(t\boldsymbol{x})$ and
    the product. Moreover, as $\lambda$ is integrable, it is bounded on $|x|>\epsilon$. We also have that
	$D(t\boldsymbol{x})$
	tends to 1 $\D \boldsymbol{x}$-a.e., because for $\boldsymbol{x}$ with different entries, $K(tx_i,tx_j) = K(t|x_i-x_j|,0)$ tends to 0 if $x_i \neq x_j$ for $t \to \infty$ by assumption. Therefore by dominated convergence,
	\begin{align*}
		\mathcal{L}_{\boldsymbol{Z}}\lbrb{t,\frac{\boldsymbol{s}}{t}}
		&\xrightarrow[t \to \infty]{}
		1 + 
		\sum_{n \geq 1}\frac{(-1)^n}{n!}
		\lbrb{K^\ast\int_0^1\lambda_\infty
			\beta
			\frac{\langle\boldsymbol{s},
				\boldsymbol{v}
				\rangle}{
				1/Q+(1-x)\boldsymbol{\langle s,
					\boldsymbol{v}
					\rangle}}\D x}^n\\
		&=
		\lbrb{\frac{1/Q}{1/Q+\langle\boldsymbol{s},
				\boldsymbol{v}\rangle}}^{K^\ast\lambda_\infty\beta}.
	\end{align*}
	The last is indeed  the Laplace transform of
	$Y\boldsymbol{v}$ with
	$Y\sim \Gamma(K ^\ast\lambda_\infty \beta,1/Q)$ as claimed, which concludes the proof.
    	
\end{proof}

\section*{Acknowledgments}
We gratefully thank the anonymous referees for their careful review of our manuscript and for their valuable comments and suggestions.

This study is financed by the European Union's NextGenerationEU, through the National Recovery and Resilience Plan of the Republic of Bulgaria, project No BG-RRP-2.004-0008.

\addcontentsline{toc}{section}{References}

\bibliographystyle{plain}
\bibliography{bibliography.bib}

\end{document}